\let\uml\"
\title{Automatic Continuity of Pure Mapping Class Groups}
\author{Ryan Dickmann}  
\address{686 Cherry St NW, Atlanta, GA 30332} 
\email{rdickmann3@gatech.edu}  
\thanks{The author acknowledges support from NSF grant RTG DMS--1745583.} 
\keywords{automatic continuity, polish group, topological group, mapping class group, pure mapping class group, boundary, manifold with boundary, noncompact, non-compact}
\subjclass[2010]{57K20, 57M07, 57S05, 20F65}
\newcommand{\p}[1]{\medskip \noindent \emph{#1}.}
\newcommand{\R}{\mathbb{R}}
\newcommand{\Z}{\mathbb{Z}}
\newcommand{\Q}{\mathbb{Q}}
\newcommand{\N}{\mathbb{N}}
\newcommand{\MCG}{\operatorname{Map}}
\newcommand{\PMCG}{\operatorname{PMap}}
\newcommand{\PMCGc}{\PMCG_{c}}
\newcommand{\PMCGcc}[1]{\overline{\PMCGc(#1)}}
\newcommand{\Homeo}{\operatorname{Homeo}}
\newcommand{\supp}{\operatorname{supp}}
\newcommand{\Stab}{\operatorname{Stab}}
\newcommand{\lilo}{\mathrm{o}}
\let\emptyset\varnothing
\newtheorem*{theorem_a}{Theorem A}
\newtheorem*{theorem_b}{Theorem B}
\newtheorem{theorem}{Theorem}[section]
\newtheorem{proposition}[theorem]{Proposition}
\newtheorem{corollary}[theorem]{Corollary}
\newtheorem{lemma}[theorem]{Lemma}
\theoremstyle{definition}
\newtheorem{remark}[theorem]{Remark}
\begin{document}

\begin{abstract}
We completely classify the orientable infinite-type surfaces $S$ such that $\PMCG(S)$, the pure mapping class group, has automatic continuity. This classification includes surfaces with noncompact boundary. In the case of surfaces with finitely many ends and no noncompact boundary components, we prove the mapping class group $\MCG(S)$ does not have automatic continuity. We also completely classify the surfaces such that $\PMCGcc{S}$, the subgroup of the pure mapping class group composed of elements with representatives that can be approximated by compactly supported homeomorphisms, has automatic continuity. In some cases when $\PMCGcc{S}$ has automatic continuity, we show any homomorphism from $\PMCGcc{S}$ to a countable group is trivial. 
\end{abstract}

\maketitle

\section{Introduction} \label{intro}

A surface will refer to a second-countable, connected, orientable, 2-manifold, possibly with boundary. Let $\Homeo_\partial(S)$ be the group of (orientation-preserving) homeomorphisms of $S$ that fix the boundary pointwise. The \textit{mapping class group} $\MCG(S)$ is defined to be \begin{align*}
    \MCG(S) = \Homeo_{\partial}(S)/\sim
\end{align*} where two homeomorphisms are equivalent if they are isotopic relative to the boundary of $S$. A \textit{degenerate end} will refer to an end with a closed neighborhood homeomorphic to a disk with boundary points removed. Throughout the paper, we assume surfaces do not have degenerate ends, since filling in degenerate ends does not change the underlying mapping class group.

A surface is said to be of \textit{infinite type} when $\pi_1$ is infinitely generated, otherwise, it is of \textit{finite type}. A \textit{Polish group} is a topological group that is separable and completely metrizable. In the finite-type case, mapping class groups of surfaces are finitely generated and are therefore countable with no interesting Polish group structure. Mapping class groups for infinite-type surfaces, however, are uncountable and are Polish groups when given the quotient topology inherited from the compact-open topology on $\Homeo_{\partial}(S)$.

Mann \cite{Mann2019} proved that certain mapping class groups of infinite-type surfaces without boundary have \textit{automatic continuity}; i.e., every homomorphism from these groups to a separable group is continuous. Mann also found examples of mapping class groups that admit discontinuous homomorphisms to a finite group and asked which mapping class groups have automatic continuity. Towards this question, we fully classify the pure mapping class groups that have automatic continuity. 

\p{Pure mapping class groups} The \textit{pure mapping class group} of a surface, denoted $\PMCG(S)$, is the subgroup of the mapping class group consisting of elements that fix the ends of the surface. A \textit{disk with handles} will refer to any surface that can be constructed by taking a disk, removing a closed, totally disconnected set from the boundary (whose points become the ends of the surface), and then attaching infinitely many handles accumulating to some subset of the ends. See Figure \ref{DWH} for an example. The assumption of infinitely many handles is simply to rule out finite-type cases.

\begin{theorem_a} \label{thma}
Let $S$ be an infinite-type surface. Then $\PMCG(S)$ has automatic continuity if and only if 

\begin{enumerate} [(i)]
    \item $S$ is a connected sum of finitely many disks with handles with any finite-type surface, and
    \item $S$ has finitely many ends accumulated by genus.
\end{enumerate}
\end{theorem_a}

The finite-type surface is necessary in the first condition to capture additional cases with finitely many compact boundary components and finitely many punctures. The final condition is required since, for surfaces with infinitely many ends accumulated by genus, we show there is a discontinuous homomorphism $\PMCG(S) \rightarrow \Z_2$ which factors through $\Z^\omega$, the infinite countable product. If we equip $\Z$ with the discrete topology, then $\Z^\omega$ is a Polish group with the product topology. The map to $\Z^\omega$ is given by the work of Aramayona--Patel--Vlamis in the compact boundary case \cite{APV2017}, and this was extended by the author to the noncompact boundary case \cite{Dickmann_2023}. More precisely, their works show $\PMCG(S) = \PMCGcc{S}$ when $S$ has at most one end accumulated by genus, and otherwise $\PMCG(S)$ factors into a semidirect product of a special subgroup with $\Z^n$ where $n$ is finite if and only if there are finitely many ends accumulated by genus. We now discuss  $\PMCGcc{S}$ further.

\begin{figure}[!htbp]
\centering
\includegraphics[width = 0.6\textwidth]{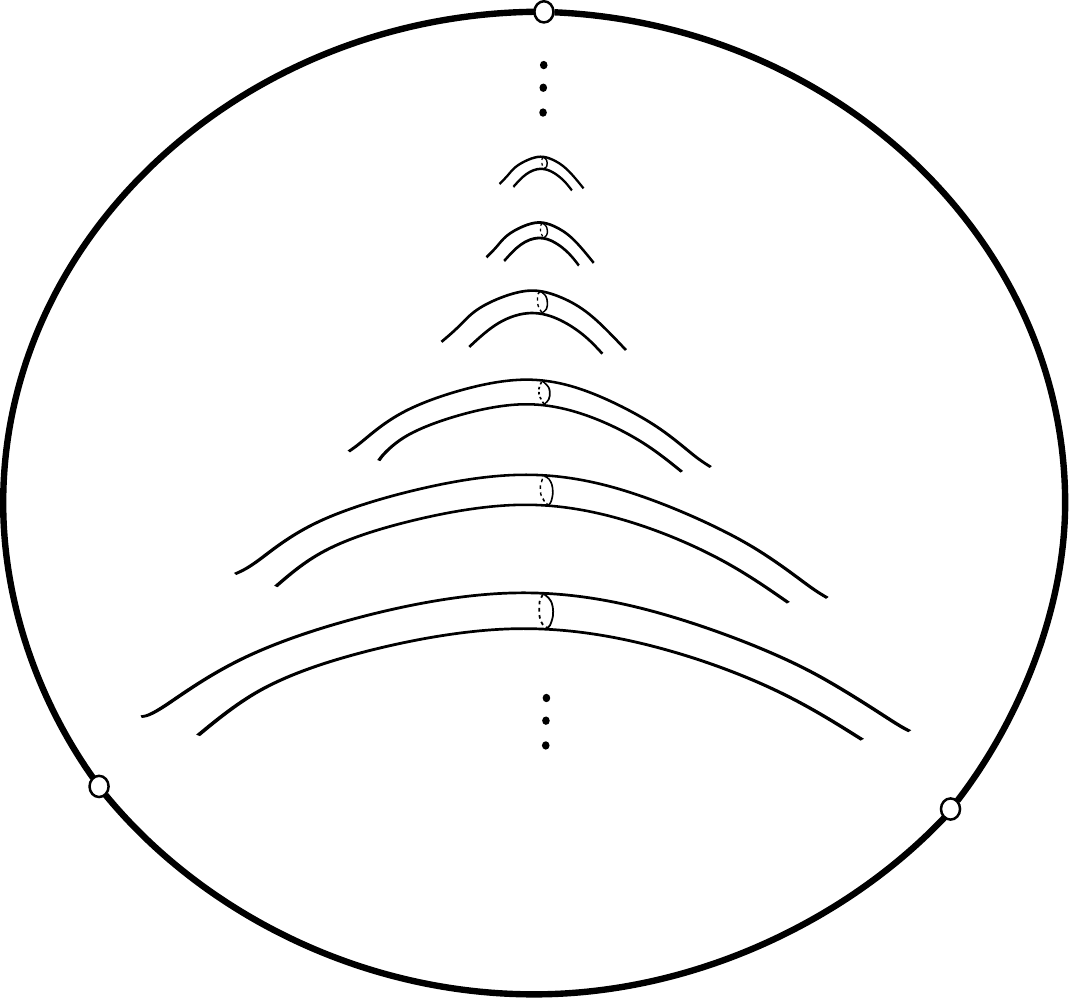}
\caption{A visualization of a disk with handles with two ends. The handle attaching procedure in this case joins together the bottom two ends of the disk into a single end. By Theorem \hyperref[thma]{A}, $\PMCG(S)$ has automatic continuity for this surface. Note $\MCG(S)=\PMCG(S)$ for any disk with handles since fixing the boundary forces the ends to be fixed.}
\label{DWH}
\end{figure}

\p{Closure of the subgroup of compactly supported maps} We say $f \in \MCG(S)$ is \textit{compactly supported} if $f$ has a representative that is the identity outside of a compact subset of $S$. The subgroup consisting of compactly supported mapping classes is denoted $\PMCGc(S)$ since every compactly supported mapping class is pure. The closure of this subgroup, denoted $\PMCGcc{S}$, can be described as the subgroup composed of elements with representatives that can be approximated by compactly supported homeomorphisms. We also fully classify the $\PMCGcc{S}$ that have automatic continuity.

\begin{theorem_b} \label{thmb}
Let $S$ be an infinite-type surface. Then $\PMCGcc{S}$ has automatic continuity if and only if $S$ is a connected sum of finitely many disks with handles with any finite-type surface.
\end{theorem_b}

As a consequence of Mann's result \cite{Mann2019} of automatic continuity for the mapping class groups of the sphere minus the Cantor set and the plane minus the Cantor set, Vlamis \cite{vlamis2020perfect} showed that any homomorphism from these groups to a countable group is trivial. Using a similar but independent proof, we show the following.

\begin{corollary}\label{cor1.1}
   Let $S$ be a disk with handles. Then every homomorphism from $\PMCGcc{S}$ to a countable group is trivial. Therefore, $\PMCGcc{S}$ contains no proper normal subgroups of countable index and no proper subgroups of finite index. 
\end{corollary}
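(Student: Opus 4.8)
The plan is to leverage the automatic continuity of $\PMCGcc{S}$ — which holds here by Theorem~\hyperref[thmb]{B}, since a single disk with handles is in particular a connected sum of finitely many disks with handles with a (trivial) finite-type surface — and reduce the statement to a topological ``pushing to infinity'' argument. First I would fix a homomorphism $\phi\colon \PMCGcc{S}\to C$ to a countable group $C$ and equip $C$ with the discrete topology. Then $C$ is countable, hence separable and completely metrizable, so Theorem~\hyperref[thmb]{B} forces $\phi$ to be continuous. Since the identity is open in the discrete group $C$, the kernel $N:=\ker\phi=\phi^{-1}(\{e\})$ is an \emph{open} and of course \emph{normal} subgroup of $\PMCGcc{S}$. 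Because the compact--open topology on $\MCG(S)$ has a neighborhood basis of the identity given by classes admitting a representative that restricts to the identity on a compact subsurface, openness of $N$ yields a compact subsurface $K\subset S$ such that $N$ contains the subgroup $U_K$ of all classes supported on $S\setminus K$. It then suffices to show $N=\PMCGcc{S}$, i.e. that $\phi$ is trivial.

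The core step is to show every compactly supported class lies in $N$. Fix $g\in\PMCGc(S)$ with support a compact subsurface $\Sigma$. As $N$ is normal it is closed under conjugation, so it is enough to produce $h\in\PMCGcc{S}$ with $h(\Sigma)\cap K=\varnothing$: then $hgh^{-1}$ is supported on $h(\Sigma)\subset S\setminus K$, whence $hgh^{-1}\in U_K\subseteq N$ and therefore $g\in N$. To build $h$ I would use the self-similar structure of the disk with handles: because infinitely many handles accumulate to an end, $S\setminus K$ still contains arbitrarily large genus, so there is a compact subsurface $\Sigma'\subset S\setminus K$ homeomorphic to $\Sigma$ and a homeomorphism, supported on a single compact subsurface $L\supseteq K\cup\Sigma\cup\Sigma'$ and equal to the identity on $\partial L$, carrying $\Sigma$ onto $\Sigma'$. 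Such an $h$ exists by the change-of-coordinates principle; being supported on the compact set $L$ it lies in $\PMCGc(S)\subseteq\PMCGcc{S}$ and is automatically pure. This gives $\PMCGc(S)\subseteq N$.

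Finally, since $\PMCGc(S)$ is dense in $\PMCGcc{S}$ by definition and $N$, being open, is also closed, we conclude $\PMCGcc{S}=\overline{\PMCGc(S)}\subseteq N$, so $\phi$ is trivial. For the stated consequences: a proper normal subgroup $M\leq\PMCGcc{S}$ of countable index would give a nontrivial homomorphism $\PMCGcc{S}\to\PMCGcc{S}/M$ onto a countable group, contradicting the above, so no such $M$ exists; and any subgroup of finite index contains its normal core, which is normal of finite (hence countable) index and therefore equals $\PMCGcc{S}$, forcing the original subgroup to be the whole group.

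I expect the main obstacle to be the construction of the conjugator $h$ in the second paragraph: verifying that a disk with handles is self-similar enough that \emph{every} compact subsurface can be pushed off \emph{any} prescribed compact set by an ambient, compactly supported, end-preserving homeomorphism. The delicate points are checking that the ``parking spot'' $\Sigma'$ can always be found with the correct topological type and with connected complement inside $L$, which is precisely where the assumption of infinitely many handles and the control on the end and boundary structure of a disk with handles are used.
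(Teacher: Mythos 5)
Your proposal follows essentially the same route as the paper: automatic continuity (Theorem \hyperref[thmb]{B}) makes the kernel open and closed, openness gives a stabilizer $\Stab(K)$ inside the kernel, a conjugation pushes any compactly supported class into that stabilizer, and density of $\PMCG_c(S)$ together with closedness finishes; the consequences are derived identically. The one point where your argument as written can fail is the construction of the conjugator $h$ when the support $\Sigma$ of $g$ meets $\partial S$: every element of $\Homeo_\partial(S)$ fixes $\partial S$ pointwise, so $h(\Sigma)$ necessarily contains $\Sigma \cap \partial S$, and if $K$ happens to contain that set then $h(\Sigma) \cap K \neq \emptyset$ for every admissible $h$. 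The paper removes this obstruction in one line: since $g$ fixes $\partial S$ pointwise, first isotope it to be supported in a compact subsurface disjoint from $\partial S$; the displacement then takes place in the interior of $S$, which is a Loch Ness monster, where change of coordinates applies without the rel-boundary matching of complements that your last paragraph worries about. With that one added step your proof is the paper's proof.
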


This is in stark contrast to the mapping class groups of finite-type surfaces which are residually finite. One natural approach to studying infinite groups is to investigate their finite quotients, but for the $\PMCGcc{S}$ of disks with handles, we do not even have countable quotients to work with. Note that $\PMCG(S)$ always has a proper normal subgroup of countable index when $S$ has at least two ends accumulated by genus. In particular, when there are finitely many ends accumulated by genus, $\PMCGcc{S}$ is the desired subgroup, and when there are infinitely many ends accumulated by genus, the kernel of the discontinuous homomorphism to $\Z_2$ discussed above is the desired subgroup.

\p{Mapping class groups} Using the same techniques in the proofs of the above theorems, we are also able to comment on the automatic continuity of the full mapping class groups.   

\begin{theorem} \label{thm1.2}
        Suppose $S$ is an infinite-type surface with finitely many ends and no noncompact boundary components. Then $\MCG(S)$ does not have automatic continuity. 
\end{theorem}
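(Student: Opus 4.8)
The plan is to construct an explicit discontinuous homomorphism from $\MCG(S)$ to a separable (in fact discrete) target, thereby witnessing the failure of automatic continuity. The key observation is that under the hypotheses—finitely many ends, no noncompact boundary—the surface $S$ has a finite, nonempty set of ends $E$, and $\MCG(S)$ acts on $E$ by permutations, giving a surjection $\MCG(S) \to \Sigma$ onto a subgroup $\Sigma$ of the finite symmetric group on $E$, with kernel exactly $\PMCG(S)$. Because $\PMCG(S)$ fixes all ends pointwise, this gives a short exact sequence
\begin{equation*}
    1 \longrightarrow \PMCG(S) \longrightarrow \MCG(S) \longrightarrow \Sigma \longrightarrow 1.
\end{equation*}
The idea is to leverage the discontinuous homomorphism $\PMCG(S) \to \Z_2$ (or $\Z^\infty$) discussed in the introduction, combined with the finite quotient $\Sigma$, to assemble a discontinuous homomorphism out of $\MCG(S)$.

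\medskip\noindent First I would split into the two cases dictated by condition (ii) of Theorem~\hyperref[thma]{A}. If $S$ has infinitely many ends accumulated by genus, then $\PMCG(S)$ admits a discontinuous homomorphism $\varphi\colon\PMCG(S)\to\Z_2$ factoring through $\Z^\infty$, as noted above. The main task is to extend $\varphi$ (or a suitable power/average of it) to all of $\MCG(S)$. Since $\Sigma$ is finite, $\PMCG(S)$ has finite index in $\MCG(S)$, so I would try to induce or co-induce the homomorphism: the obstruction to extending $\varphi$ to $\MCG(S)$ lives in a finite group cohomology computation, and one can always pass to a $\Sigma$-invariant combination of the conjugates $g\mapsto \varphi(x^{-1}gx)$ over coset representatives $x$, landing in $(\Z_2)^{|\Sigma|}$ or its quotient. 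The crucial point is to check that this extension remains discontinuous; this follows because restriction to the finite-index closed subgroup $\PMCG(S)$ recovers (a nonzero multiple of) $\varphi$, and if the extension were continuous its restriction would be too, contradicting discontinuity of $\varphi$.

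\medskip\noindent In the remaining case, $S$ has only finitely many ends accumulated by genus. Here $\PMCG(S)$ has automatic continuity by Theorem~\hyperref[thma]{A}, so the previous strategy must be replaced. Instead I would exploit the fact, recalled in the introduction, that $\PMCG(S)=\PMCGcc{S}\rtimes\Z^n$ with $n$ finite and positive (positivity coming from having at least two ends accumulated by genus, which the finitely-many-ends hypothesis allows when $n\geq 1$; the genuinely degenerate subcase with $n=0$ and a single end accumulated by genus must be handled separately and may require that $S$ fail condition (i), or else falls under a disk-with-handles analysis where one instead uses a Mann-style argument). The $\Z^n$ factor surjects onto $\Z$, and $\Z$ carries the standard discontinuous homomorphisms to $\Z_2$ obtainable from a nonprincipal choice; composing gives a discontinuous $\PMCG(S)\to\Z_2$, which I would then extend across the finite quotient $\Sigma$ exactly as before.

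\medskip\noindent The main obstacle I anticipate is twofold. First, verifying that the $\Sigma$-equivariant extension of the discontinuous homomorphism off $\PMCG(S)$ exists and stays discontinuous requires care: one must ensure the averaging over cosets does not kill the homomorphism (e.g.\ by working in a target where the relevant sum is nonzero, or by passing to an induced module), and one must confirm that discontinuity is inherited, which hinges on the fact that a continuous homomorphism restricts continuously to the open subgroup $\PMCG(S)$. Second, and more delicate, is confirming the hypotheses actually force the existence of a discontinuous homomorphism on $\PMCG(S)$ in every subcase—in particular pinning down that under ``finitely many ends, no noncompact boundary'' the surface always has either infinitely many ends accumulated by genus relative to its total ends (impossible, since ends are finite) so that in fact one is always in the second case, making the semidirect-product $\Z^n$ argument the genuine workhorse; I would want to double-check the edge cases where $S$ is planar (no genus at all, so $n=0$) and reconcile them with the claim, since there the discontinuity must come from a different source such as a Dehn-twist flux or a boundary-based construction rather than from ends accumulated by genus.
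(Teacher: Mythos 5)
There is a genuine gap, and it sits exactly where you put your trust: the semidirect-product factor $\Z^n$ cannot be the source of discontinuity. Since $S$ has finitely many ends, it has finitely many ends accumulated by genus (you correctly note your first case is vacuous), so Lemma \ref{structure} gives $\PMCG(S)=\PMCGcc{S}\rtimes\Z^{n-1}$ with $n$ finite. But $\PMCGcc{S}$ is a closed subgroup of countable index, hence open by Baire category, so the quotient $\Z^{n-1}$ is a \emph{countable discrete} group; every homomorphism from a countable discrete group to any topological group is continuous, and there is no such thing as a ``discontinuous homomorphism $\Z\to\Z_2$ from a nonprincipal choice'' (ultrafilter constructions produce discontinuous maps out of infinite products like $\Z^\infty$, not out of $\Z$). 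So any homomorphism factoring through the $\Z^{n-1}$ factor is continuous, and your Case 2 produces nothing. Worse, the hypotheses do not even guarantee $n\geq 2$ (e.g.\ a once-punctured Loch Ness monster has $H$ trivial and $\PMCG(S)=\PMCGcc{S}$), so in some subcases there is no $\Z$ factor at all.

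The missing ingredient is Lemma \ref{domat} (Domat): an infinite-type surface with finitely many ends and no noncompact boundary has infinite genus, and once the interior has at least two ends (the Loch Ness monster itself is handled separately in the appendix of Domat's paper), one finds an infinite sequence of disjoint nondisplaceable annuli escaping every compact set, which yields a discontinuous homomorphism $\phi:\PMCGcc{S}\to\Q$. The paper then avoids your extension/averaging problem entirely: rather than extending $\phi$ to $\MCG(S)$ (where the transfer to $\Z_2$ over an even-order $\Sigma$ could indeed vanish, as you worry), it observes that $\ker\phi$ is a non-open countable-index subgroup of $\PMCGcc{S}$, hence of $\MCG(S)$, because $\PMCGcc{S}$ has countable index in $\PMCG(S)$ and $\PMCG(S)$ has finite index in $\MCG(S)$ (finitely many ends), and non-openness persists up a chain of open subgroups. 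By Proposition \ref{smallindex}, a Polish group with automatic continuity has the small index property, so $\MCG(S)$ cannot have automatic continuity. Your short exact sequence and the finiteness of $\Sigma$ are used only in this index-counting role, not to build an explicit homomorphism.
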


For example, the mapping class group of the \textit{ladder surface}, the unique surface with no boundary and exactly two ends each accumulated by genus, does not have automatic continuity. We also extend the reverse direction of Theorem \hyperlink{thma}{A} to the full mapping class group. 

\begin{theorem} \label{thm1.3}
    Suppose $S$ is an infinite-type surface satisfying the conditions of Theorem \hyperref[thma]{A}. Then $\MCG(S)$ has automatic continuity.
\end{theorem}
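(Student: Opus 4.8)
The plan is to deduce Theorem \ref{thm1.3} from Theorem \hyperref[thma]{A} by an extension argument. Recall that $\PMCG(S)$ is exactly the kernel of the natural action of $\MCG(S)$ on the space of ends $\mathcal{E}$ of $S$, and that the action $\MCG(S)\to\Homeo(\mathcal{E})$ is continuous; hence $\PMCG(S)$ is a closed normal subgroup and we obtain a short exact sequence of Polish groups
\begin{equation*}
1 \longrightarrow \PMCG(S) \longrightarrow \MCG(S) \longrightarrow Q \longrightarrow 1,
\end{equation*}
where $Q$ is the image of $\MCG(S)$ in $\Homeo(\mathcal{E})$. Since $S$ satisfies the hypotheses of Theorem \hyperref[thma]{A}, that theorem gives automatic continuity of $\PMCG(S)$. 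The strategy is then: (1) prove that $Q$ has automatic continuity, and (2) invoke the fact that the automatic continuity property is closed under extensions by closed normal subgroups of Polish groups.

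For step (2), let $\phi\colon\MCG(S)\to H$ be a homomorphism into a separable group. Theorem \hyperref[thma]{A} makes $\phi|_{\PMCG(S)}$ continuous, so $M:=\overline{\phi(\PMCG(S))}$ is closed and normal in $H':=\overline{\phi(\MCG(S))}$, and $\phi$ descends to a homomorphism $\overline{\phi}\colon Q\to H'/M$, which is continuous once $Q$ has automatic continuity. Continuity of $\phi$ then follows from the continuity of its restriction to the closed normal subgroup $\PMCG(S)$ together with continuity of the induced map on $Q$; this implication is the standard extension fact noted above, provable at the level of homomorphisms (or via the Steinhaus bookkeeping, locating one cover member controlling the kernel direction and one controlling the quotient direction, then merging their neighborhoods using continuity of multiplication). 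Thus everything reduces to step (1).

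The real content, and the main obstacle, is step (1): automatic continuity of $Q$. By the change-of-coordinates principle (in the bordered setting, \cite{dickmann2021mapping}), $Q$ is precisely the group of self-homeomorphisms of $\mathcal{E}$ preserving the finite distinguished subset $\mathcal{E}_g$ of ends accumulated by genus together with the remaining end-type data. Passing to the finite-index subgroup $Q_0$ that fixes $\mathcal{E}_g$ and the finitely many finite-type ends pointwise (automatic continuity is insensitive to finite index), $Q_0$ becomes essentially the homeomorphism group of a compact, second-countable, zero-dimensional space with finitely many marked points. I would establish automatic continuity here by a Steinhaus/ample-generics argument in the spirit of the Kechris--Rosendal and Rosendal--Solecki treatments of $\Homeo(2^{\mathbb{N}})$ and $S_\infty$. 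The difficulty is that, because the set removed from the boundary of a disk with handles may be an arbitrary compact subset of the circle, $\mathcal{E}$ can be any compact zero-dimensional metrizable space---a Cantor set, a convergent sequence, or a scattered space of high Cantor--Bendixson rank---so the homeomorphism-group automatic continuity must be pushed through all of these Stone spaces uniformly, with the marked genus-ends and the realizability constraint adding bookkeeping. An alternative route that avoids isolating $Q$ is to fold the end-permuting classes directly into the fragmentation-and-displacement argument proving Theorem \hyperref[thma]{A}, absorbing permutations of ends by sliding handles through the self-similar handle structure of the disk-with-handles summands; I expect this to be the more technical but more self-contained path.
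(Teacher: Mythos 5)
Your proposal has two genuine gaps, and both stem from not noticing what the hypotheses of Theorem \hyperref[thma]{A} force the end space to be. First, your step (2) is not the routine fact you assert: continuity of $\phi|_{\PMCG(S)}$ together with continuity of the induced map $\overline{\phi}\colon Q \to H'/M$ only controls $\phi(g)$ modulo the closed subgroup $M=\overline{\phi(\PMCG(S))}$; an element $g$ near the identity could a priori have $\phi(g)=wm$ with $w$ small but $m\in M$ arbitrary, and neither your homomorphism-level sketch nor the ``Steinhaus bookkeeping'' parenthetical explains how to rule this out. Second, and more importantly, step (1) --- which you correctly identify as carrying the real content of your approach --- attacks a problem that does not arise. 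Under the hypotheses of Theorem \hyperref[thma]{A} the end space $E(S)$ is \emph{finite}: any end of a disk with handles not accumulated by genus has a closed neighborhood that is a disk with boundary points removed, hence is degenerate and excluded by the paper's standing convention, so condition (ii) leaves each summand with finitely many ends, and the finite-type summand contributes only finitely many punctures. There is no Cantor set or high Cantor--Bendixson rank to push through, and the program of proving automatic continuity for homeomorphism groups of arbitrary Stone spaces is both unnecessary and unsubstantiated. Once you observe that $E(S)$ is finite, your strategy collapses to a short valid argument: $\PMCG(S)$ is the kernel of the continuous action on a finite discrete set, hence an open finite-index subgroup, and a homomorphism continuous on an open subgroup is continuous at the identity, hence everywhere. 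As written, though, neither of your two steps is established.

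For comparison, the paper proves the Steinhaus property for $\MCG(S)$ directly in Proposition \ref{reverse_a}: it chooses the compact subsurface $K$ so large that every component of $S \searrow K$ is a one-ended sliced Loch Ness monster, which forces $\operatorname{Stab}(K) \subseteq \PMCGcc{S}$, and then the fragmentation-and-displacement argument of Proposition \ref{reverse_b} applies verbatim to give $\operatorname{Stab}(K) \subseteq W^{328}$ for any countably syndetic symmetric $W \subseteq \MCG(S)$. This sidesteps the quotient entirely and yields a uniform Steinhaus constant, which is stronger than what the open-subgroup reduction gives directly. Your alternative suggestion of folding end-permuting classes into the fragmentation argument is closer in spirit to this, but it is left as a one-sentence aspiration.
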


\p{Outline} In Section \ref{noncompact}, we discuss some background on surfaces with noncompact boundary, and in Section \ref{tools}, we discuss the tools needed to prove the reverse directions of Theorems \hyperref[thma]{A} and \hyperref[thmb]{B}. In Section \ref{reverse}, we prove the reverse directions as well as Theorem \ref{thm1.3} using adaptations Mann's techniques \cite{Mann2019} and tools from the author for working with surfaces with noncompact boundary \cite{Dickmann_2023}. We also use a new extension to a classical lemma of Sierpi\'nski; see Section \ref{sierp}. In Section \ref{forward}, we prove the forward directions of Theorems \hyperref[thma]{A} and \hyperref[thmb]{B}, and in Section \ref{extra}, we prove Corollary \ref{cor1.1} and Theorem \ref{thm1.2}.

Automatic continuity proofs largely rely on some form of self-similarity in a given group, and in particular, we take advantage of the self-similarity of the mapping class groups of the sliced Loch Ness monsters. A \textit{sliced Loch Ness monster} is any surface with nonempty boundary, no compact boundary components, infinite genus, and one end. See Figure \ref{SLNM}. The key idea is that any sliced Loch Ness monster contains closed proper copies of itself, and therefore, the mapping class group does as well; see Section \ref{moieties}. On the other hand, the \textit{Loch Ness monster}, the unique surface with one end, infinite genus, and empty boundary, does not contain a closed proper copy of itself,\footnote{It is unknown whether the mapping class group of the Loch Ness monster contains a proper copy of itself. A group that does not contain a proper copy of itself is known as \textit{co-Hopfian}. Aramayona--Leininger--McLeay \cite{aramayona2021big} have studied the co-Hopfian property for mapping class groups of infinite-type surfaces, and in particular they found uncountably many examples of pure mapping class groups that are not co-Hopfian.} and Domat and the author showed its mapping class group does not have automatic continuity \cite{domat2020big}. 

Once we have found examples of surfaces such that $\PMCG(S)$ and $\PMCGcc{S}$ have automatic continuity, the main difficulty in proving Theorems \hyperref[thma]{A} and \hyperref[thmb]{B} is ruling out the zoo of remaining surfaces. Note these theorems consider all surfaces including those with complicated end spaces such as large countable ordinals. Using the tools developed by the author for decomposing surfaces into simpler pieces, we can reduce the complexity of the problem significantly.

\begin{figure}[!htbp]
\centering
\includegraphics[width = 0.5\textwidth]{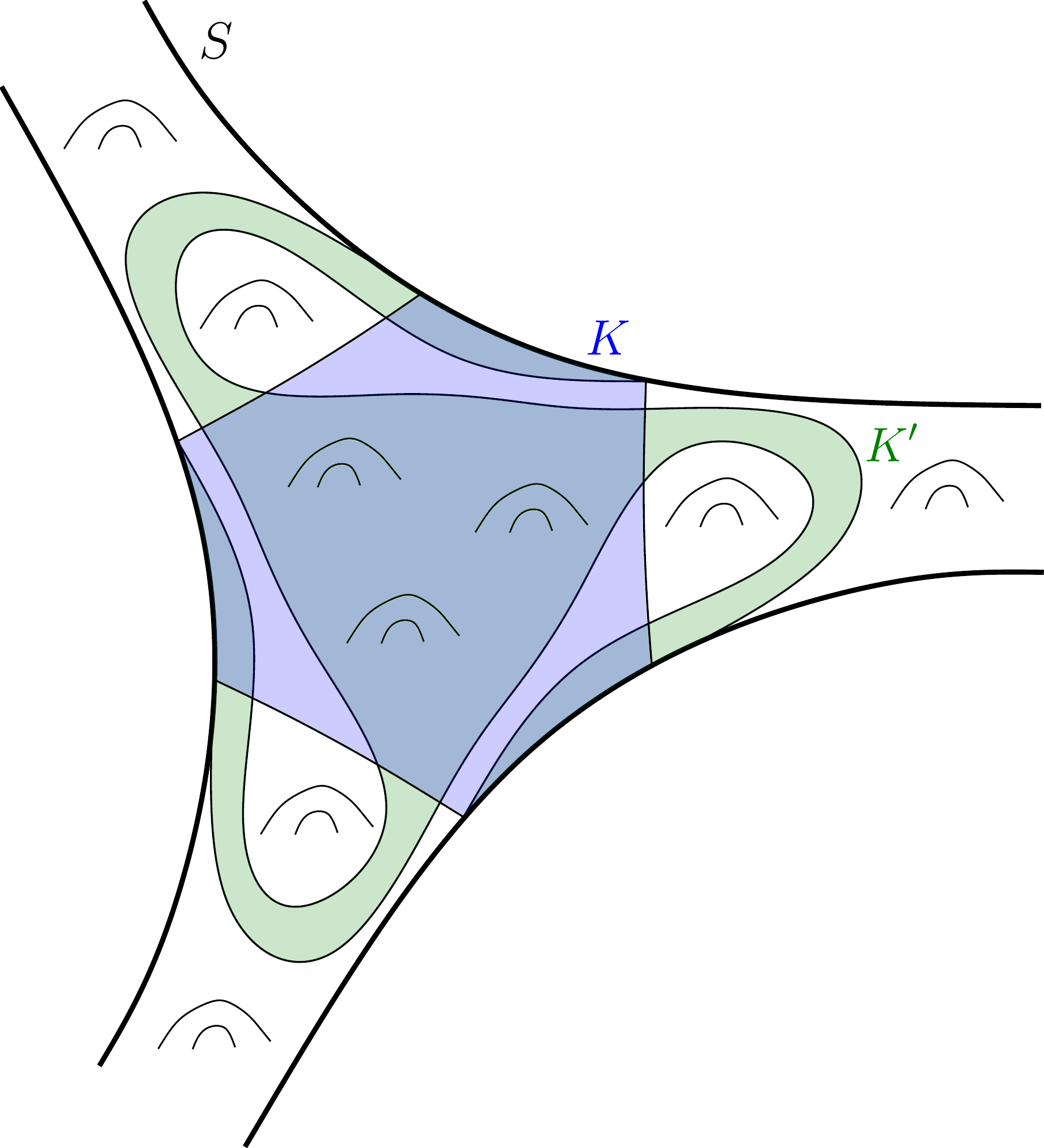}
\caption{Two subsurfaces $K$ and $K^\prime$ in the same $\PMCGcc{S}$ orbit.}
\label{badintersection}
\end{figure}

To find discontinuous homomorphisms in the remaining cases, we use the work of Domat \cite{domat2020big} who showed $\PMCGcc{S}$ admits uncountably many discontinuous homomorphisms to $\Q$ with the discrete topology when $S$ contains a certain infinite sequence of nondisplaceable subsurfaces. A \textit{nondisplaceable} subsurface in this case refers to a surface that cannot be mapped off of itself by any map in $\PMCGcc{S}$. Domat's proof relies on subsurface projections of Masur--Minksy \cite{masur1998geometry} to construct projection complexes of Bestvina--Bromberg--Fujiwara \cite{BBF2015}. For a given finite-type nondisplaceable subsurface $K$, a projection complex is built using subsurface projections between subsurfaces in the orbit of $K$ under the action of $\PMCGcc{S}$ on the isotopy classes of subsurfaces in $S$. The subsurface projection to $K$ is a map that takes a subsurface $K^\prime$ (distinct from $K$) in the orbit of $K$ and outputs an element of the power set of the vertex set of the curve graph of $K$. The vertex set of the curve graph is defined as the set of isotopy classes of essential simple closed curves. Recall a curve is trivial when it bounds a disk, peripheral when it bounds an annulus, and essential when it is neither trivial nor peripheral. The subsurface projection to $K$ is defined using the fact that $\partial K^\prime \cap K$ is a collection of curves and arcs in $K$. The arcs are turned into curves by surgering on intervals in $\partial K$. The issue for surfaces with noncompact boundary is that there can exist nondisplaceable subsurfaces $K$ and $K^\prime$ in the same $\PMCGcc{S}$ orbit such that $\partial K^\prime \cap K$ is the union of trivial arcs in $K$ (a trivial arc is one that bounds a disk). Trivial arcs yield trivial or peripheral curves after surgery using the boundary of $K$, so the subsurface projection is not well-defined. See Figure \ref{badintersection} for an example. We will see that the surfaces from Theorem \hyperref[thmb]{B} are exactly those that do not have the special sequences of nondisplaceable subsurfaces needed by Domat.

 \subsection*{Acknowledgments} The author would like to thank Kathryn Mann for introducing him to automatic continuity and the many techniques used in Section \ref{tools}. Thank you to the organizers of the 2019 AIM workshop for surfaces of infinite type. Thank you to Dan Margalit, Roberta Shapiro, and Sanghoon Kwak for comments on an earlier draft. Thank you to an anonymous referee for carefully reading this paper.  

\section{Surfaces with Noncompact Boundary} \label{noncompact}

Here we discuss some background on surfaces with noncompact boundary needed for the proofs of Theorems \hyperref[thma]{A} and \hyperref[thmb]{B}. We will assume the reader is familiar with the Richards classification of infinite-type surfaces without boundary \cite{Richards1963} as well as the definition of the ends space of a surface, planar ends, and ends accumulated by genus. These definitions apply without adaptation to surfaces with noncompact boundary. The first adaptation needed for noncompact boundary is that we must consider \textit{ends accumulated by compact boundary}, ends for which every closed neighborhood contains infinitely many compact boundary components, then we must consider the noncompact boundary components.

\p{Boundary chains} Deleting the noncompact boundary components of a surface induces a map $\pi$ from the ends space of the surface to the ends space of the interior surface (for more details see \cite{Dickmann_2023}, Section 4.2). For example, in Figure \ref{SLNM} deleting the noncompact boundary components induces a map sending the two ends to the single end of the interior. Suppose $e$ is an end of the surface that a noncompact boundary component points to, and let $e^\lilo = \pi(e)$ be the corresponding end of the interior surface. Then we refer to $\pi^{-1}(e^\lilo)$ as a \textit{boundary chain}, and we refer to any end in $\pi^{-1}(e^\lilo)$ as a \textit{boundary end}. Note any disk with handles has a single boundary chain, and every end is a boundary end. Other examples of surfaces with a single boundary chain can be constructed by taking a disk, deleting a set from the boundary, and then attaching surfaces without noncompact boundary components where these attached surfaces may be infinite-type and may accumulate to the set of deleted points. An end that is not a boundary end will be called an \textit{interior end}. Though the boundary chain is formally defined as a set of ends, we can also think of a boundary chain as the corresponding union of noncompact boundary components.

\p{Brown--Messer classification of surfaces} The classification of infinite-type surfaces with boundary is due to Brown and Messer \cite{classification}.  Roughly speaking, their theorem states that surfaces with boundary are classified up to homeomorphism by the Richards classification data, the ends accumulated by compact boundary, and additional data describing the boundary chains. The major achievement of Brown and Messer was finding a way to represent this boundary chain data, though it is fairly technical. Thus we will not state the actual classification theorem, and instead, we will use tools of the author developed for working with surfaces with boundary \cite{Dickmann_2023}. More examples of surfaces with boundary can be found in Section 3 of the previous paper of the author. 

Recall a sliced Loch Ness monster is any surface with nonempty boundary, no compact boundary components, infinite genus, and one end. One immediate application of the classification of surfaces is that a sliced Loch Ness monster is determined by the number of boundary components. We will refer to an $n$-sliced Loch Ness monster to emphasize the number of boundary components. Note that $n$ may be infinite.

Recall a disk with handles is a surface that can be constructed by taking a disk, removing a closed, totally disconnected set from the boundary, and then attaching infinitely many handles accumulating to some subset of the ends. Sliced Loch Ness monsters are examples of disks with handles since we can construct any sliced Loch Ness monster by attaching handles to a disk with points removed from the boundary in a way that joins every end to a single end. 

\begin{remark}
    We could have equivalently defined a sliced Loch Ness monster as a disk with handles with exactly one end. There are other constructions of sliced Loch Ness monsters that do not start with a single disk which will be useful for the proof of Theorem \hyperref[thma]{A}. We will discuss these in Section \ref{moieties}. See Figure \ref{SLNM} for some examples. 
\end{remark}

\begin{figure}[!htbp]
\centering
\includegraphics[width = \textwidth]{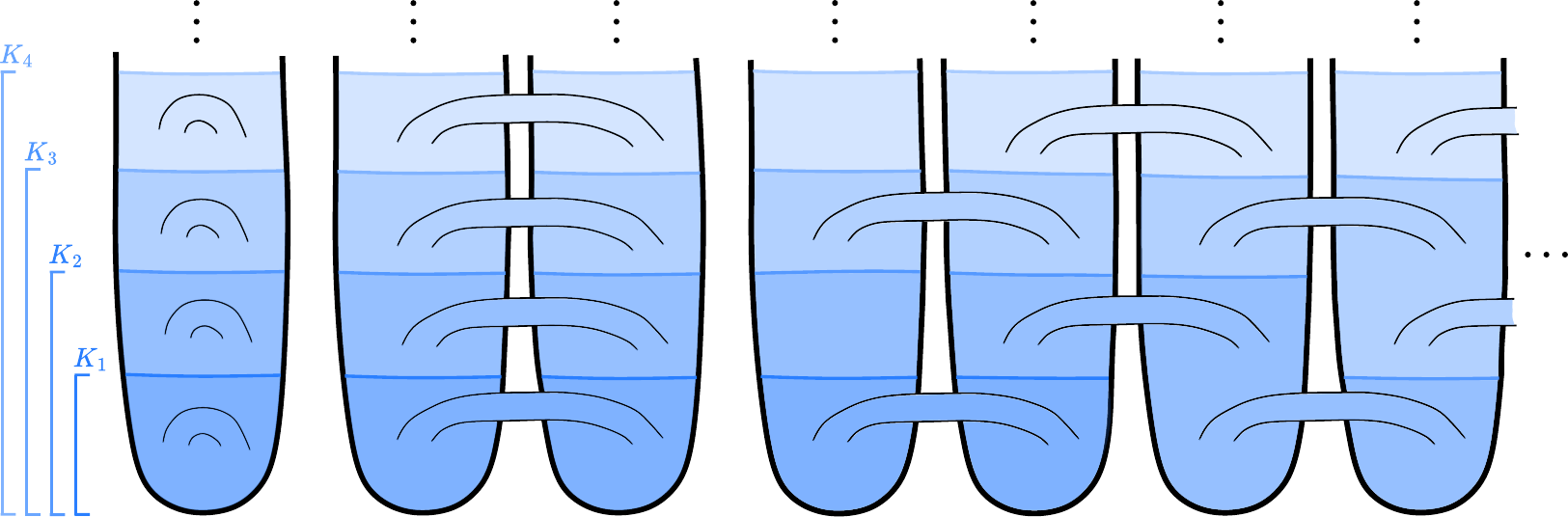}
\caption{A 1-sliced, a 2-sliced, and an $\infty$-sliced Loch Ness monster with the noncompact boundary components represented by the bold lines. The blue shading represents a given compact exhaustion $\{K_i\}$ for each surface.}
\label{SLNM}
\end{figure}

\p{Cutting up a surface with boundary} The following result of the author \cite{Dickmann_2023} shows that any surface with only ends accumulated by genus can be decomposed in some sense into sliced Loch Ness monsters and Loch Ness monsters.

\begin{lemma} \label{fulldecomp}
Every disk with handles with every end accumulated by genus can be cut along a collection of disjoint essential arcs into sliced Loch Ness monsters.

Furthermore, any infinite-type surface with every end accumulated by genus can be cut along disjoint separating curves into components that are either            
\begin{enumerate}[(i)]
    \item Loch Ness monsters with compact boundary components added, or
    \item disks with handles with compact boundary components added. 
\end{enumerate}

\end{lemma}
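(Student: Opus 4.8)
The plan is to prove both statements by organizing the genus and the end space of the surface simultaneously, and then certifying the homeomorphism type of each complementary piece with the classification of surfaces with boundary. The features I would lean on for a disk with handles $S$ without planar ends are that it has no compact boundary components, has infinite genus, carries a single boundary chain (so every end is a boundary end), and -- crucially -- that genus accumulates to \emph{every} end. This last property is exactly what will force each piece I cut off to have infinite genus, so that it is a genuine sliced Loch Ness monster rather than a finite-genus remnant.

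For the first statement I would build a standard model and match it to $S$. First I would fix a refining sequence of finite clopen partitions $\mathcal{P}_1 \prec \mathcal{P}_2 \prec \cdots$ of the end space $E$ of $S$ that separates the points of $E$ in the limit. I would then assemble a model surface $\Sigma$ by gluing one sliced Loch Ness monster to each block of each $\mathcal{P}_n$ along essential arcs, following the tree pattern in which the monster of a block is glued to the monsters of the blocks refining it in $\mathcal{P}_{n+1}$. Placing infinite genus in every monster forces genus to accumulate along every branch, hence to every end, and gluing only along arcs with endpoints on the noncompact boundary guarantees $\Sigma$ has no compact boundary components. The substance of the argument is then to check, using the Brown--Messer classification in the form developed by the author in \cite{dickmann2021mapping}, that $\Sigma$ and $S$ agree on every invariant -- the end space, the single boundary chain data, the accumulation of genus, and the absence of compact boundary -- which produces a homeomorphism $\Sigma \to S$ carrying the gluing arcs to disjoint essential arcs. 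Cutting $S$ along their images returns precisely the sliced Loch Ness monster pieces. There may be countably many arcs and countably many pieces; when $E$ is uncountable (say a Cantor set) its extra ends are recovered as ends escaping through infinitely many pieces along infinite branches of the tree, so there is no conflict with the countability of the total genus.

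For the second statement I would first use the end space to split the ends of the general surface into the boundary ends, which are grouped into boundary chains accumulating onto the noncompact boundary components, and the interior ends. Taking a refining sequence of clopen partitions compatible with this splitting, I would cut along disjoint separating \emph{curves} rather than arcs; since each cut is a compact curve, every piece acquires finitely many \emph{compact} boundary components, which is exactly the ``compact boundary components added'' of the statement. A piece whose ends form a single boundary chain inherits the noncompact boundary and is a disk with handles with compact boundary added, giving case (ii), while a piece surrounding a cluster of interior ends has no noncompact boundary and is a Loch Ness monster with compact boundary added, giving case (i). Exactly as before, drawing the curves deep into the regions where genus accumulates guarantees infinite genus in each piece, and the first statement may then be applied to subdivide the type-(ii) pieces further into sliced Loch Ness monsters if desired.

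The hard part throughout is the bookkeeping rather than the idea. The one genuine obstacle is to arrange simultaneously that (a) each piece is one-ended (for the monsters) or equal to a single boundary chain (for the disk-with-handles pieces), that (b) every piece captures infinite genus, and that (c) the cutting family stays disjoint and essential even though its limit set may be an uncountable set of ends. I expect (a) and (c) to interact delicately: when $E$ has no isolated points one cannot literally peel off one end at a time and stop, so the decomposition must be carried out as a single tree-indexed limit rather than a transfinite induction, and the verification that the limiting pieces are genuine sliced Loch Ness monsters rests entirely on matching the boundary-chain invariants supplied by the Brown--Messer classification \cite{classification}.
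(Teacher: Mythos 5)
The paper does not actually prove Lemma \ref{fulldecomp} --- it is imported verbatim from \cite{dickmann2021mapping} --- so there is no in-text argument to compare against; your overall strategy (build a tree-indexed model from sliced Loch Ness monster pieces and certify it against $S$ via the Brown--Messer invariants) is a reasonable one and likely close in spirit to the original. But as written the construction has a concrete gap in exactly the place you flag as delicate. If you glue one sliced Loch Ness monster to each block of each finite partition $\mathcal{P}_n$ along \emph{compact} arcs, then each monster meets only finitely many gluing arcs, so its unique end survives as an \emph{isolated} end of the model $\Sigma$: a compact set containing the finitely many arcs adjacent to a given monster separates that monster's end from every other end of $\Sigma$. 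Hence $E(\Sigma)$ is (countably many isolated points) $\cup$ (branch ends), which is not homeomorphic to $E(S)$ whenever $E(S)$ is perfect --- e.g.\ a disk with handles whose ends form a Cantor set. The assertion that ``$\Sigma$ and $S$ agree on every invariant'' therefore fails precisely in the case that makes the lemma nontrivial. The repair is to let the cutting family accumulate into the pieces: either use noncompact arcs running out to the ends (note that the paper's own standard pieces $M_i$ of a sliced Loch Ness monster are cut along the noncompact arcs $x=i\pm\tfrac12$, all limiting to the single end), or give each vertex infinitely many adjacent arcs accumulating to its monster's end so that no spurious isolated end is created. Either way one must also specify which ends of $S$ are absorbed as the ends of pieces and which arise only as limits of the arc system; this bookkeeping is the actual content of the lemma and is not supplied.

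The same issue resurfaces, in a sharper form, in your treatment of the second statement. You claim each piece acquires only \emph{finitely many} compact boundary components, but the lemma itself explicitly allows infinitely many to be added, and for good reason: a complementary piece with no noncompact boundary, finitely many compact boundary curves, and containing no end of $S$ is compact, hence of finite genus, so it cannot be a Loch Ness monster with boundary. (For the blooming Cantor tree, the naive tree-of-curves decomposition produces only finite-type pieces.) To get infinite-genus one-ended pieces, each piece must swallow an end of $S$ and will in general be bounded by infinitely many curves accumulating to that end, while the remaining (uncountably many) ends of $S$ appear only as limits of the curve system. ``Drawing the curves deep into the regions where genus accumulates'' does not resolve this, because the genus near an end lies beyond \emph{every} curve of a neighborhood basis of that end. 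So the skeleton of your argument is sound, but both halves need the cutting family to accumulate into the pieces, and the claimed matching of invariants needs to be carried out for the corrected model rather than asserted.
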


Cutting a surface $S$ along a curve or arc $\alpha$ yields a possibly disconnected surface with an identification map between subsets of the boundary such that the quotient is $S$ and the image of the identified subsets under the quotient is $\alpha$. When we say compact boundary components are added, we mean open balls with disjoint closures are removed. These components may have any number of compact boundary components added, and if we add infinitely many we assume they accumulate to some end of the original surface. 

Recall a planar end is simply one that is not accumulated by genus. Due to the assumption on degenerate ends in the introduction, a disk with handles automatically has no planar ends, so we can apply the first part of Lemma \ref{fulldecomp} to any disk with handles. Note the second part of Lemma \ref{fulldecomp} does not immediately extend to surfaces with planar ends since filling in a planar boundary end may not be possible; for example, if it is accumulated by compact boundary components or accumulated by boundary chains. For other decomposition results concerning general surfaces, see Section 4 of the work of the author \cite{Dickmann_2023}. 

We will need the following result to justify the forward directions of Theorems \hyperref[thma]{A} and \hyperref[thmb]{B}.  

\begin{lemma} \label{finitechain}
    An infinite-type surface with nonempty boundary, finitely many boundary chains, no compact boundary components, and no interior ends is a connected sum of disks with handles.
\end{lemma}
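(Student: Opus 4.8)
The plan is to reduce the statement to the decomposition result Lemma \ref{fulldecomp} and then reassemble the resulting pieces as a connected sum. First I would establish that $S$ has no planar ends and has infinite genus. Since $S$ has no interior ends, every end is a boundary end; and since $S$ has no compact boundary components, a neighborhood of any planar end would be a genus-zero surface meeting $\partial S$ only in noncompact components and containing no interior ends, hence homeomorphic to a disk with a closed subset of its boundary removed. Such an end is degenerate, which is excluded by our standing assumption. Thus every end of $S$ is accumulated by genus. In particular, since an infinite-type (hence noncompact) surface has at least one end, and that end is accumulated by genus, $S$ has infinite genus and no planar ends.

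Having arranged this, I would apply the second part of Lemma \ref{fulldecomp}, cutting $S$ along a locally finite collection of disjoint separating curves into components that are each either (i) a Loch Ness monster with compact boundary components added, or (ii) a disk with handles with compact boundary components added. The key step is to rule out the type-(i) components. A type-(i) component has a single end and only compact boundary, so that end has a neighborhood containing no noncompact boundary; but because the cutting curves form a locally finite family, the same end has a smaller neighborhood disjoint from all of them, which is therefore identified with a neighborhood of that end in $S$ itself. That neighborhood would exhibit the end as an interior end of $S$, contradicting the hypothesis that $S$ has no interior ends. Hence every component is of type (ii).

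Next I would bound the number of components using the finitely many boundary chains. Reassembling $S$ from the components amounts to gluing along the compact cutting curves, which is a connected-sum operation; since gluing along a compact circle does not merge the ends of the interiors, the number of boundary chains is additive under this operation, and each type-(ii) component has exactly one boundary chain (a disk with handles has a single boundary chain, and adding compact boundary components creates no new ends). Consequently the number of components equals the finite number of boundary chains of $S$, so there are finitely many components $P_1,\dots,P_k$, each a disk with handles with finitely or infinitely many compact boundary components. Filling in the compact boundary circles of each $P_i$ with disks produces genuine disks with handles $D_1,\dots,D_k$, and the gluing pattern presents $S$ as a connected sum of $D_1,\dots,D_k$.

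I expect the main obstacle to be this last reassembly step. The dual graph of the decomposition need not be a tree, and two components may be glued along more than one (even infinitely many) circles, so the gluing contributes additional handles beyond those recorded by the $D_i$. Because each $D_i$ already has infinite genus, these extra handles should be absorbable into the $D_i$ without changing their homeomorphism type, so that $S$ is genuinely a connected sum of finitely many disks with handles; but one must check that this absorption can be performed compatibly with the way the extra handles accumulate onto the ends, and it is here that the structure theory for surfaces with noncompact boundary developed in \cite{dickmann2021mapping} is needed to make the bookkeeping precise.
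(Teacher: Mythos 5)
Your proposal follows essentially the same route as the paper: show every end is accumulated by genus (a planar end would be a degenerate boundary end, excluded by the standing assumption), apply the second part of Lemma \ref{fulldecomp}, and rule out the Loch Ness monster components because they would create interior ends. The paper's proof stops there, treating the decomposition along separating curves directly as the desired connected-sum presentation, so your additional reassembly and counting discussion is extra detail on the same argument rather than a different method.
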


\begin{proof}
    By the assumption on interior ends, the only planar ends must be boundary ends. Since there are no compact boundary components and finitely many boundary chains, these planar ends must be degenerate. Since we assumed in the introduction that surfaces do not have degenerate ends, every end must be accumulated by genus and we can apply the second part of Lemma \ref{fulldecomp} to cut the surface along curves into disks with handles with compact boundary components added and Loch Ness monsters with compact boundary added. Note none of the components can be the second type since then there would be interior ends, so we are done. 
\end{proof}

\subsection{Standard pieces of sliced Loch Ness monsters} \label{moieties}

Now we discuss certain models of the sliced Loch Ness monsters and a standard way to break them into self-similar pieces. We represent the 1-sliced Loch Ness monster as a closed upper half-plane with a handle attached in a small ball about each integer point. Let $M_i$ be the subsurface bounded by the lines $x=i-\frac{1}{2}$ and $x=i+\frac{1}{2}$ for $i \in \Z$. We refer to each $M_i$ as a \textit{standard piece} of the 1-sliced Loch Ness monster.

\begin{figure} [!htbp]
\centering
\includegraphics[width = \textwidth]{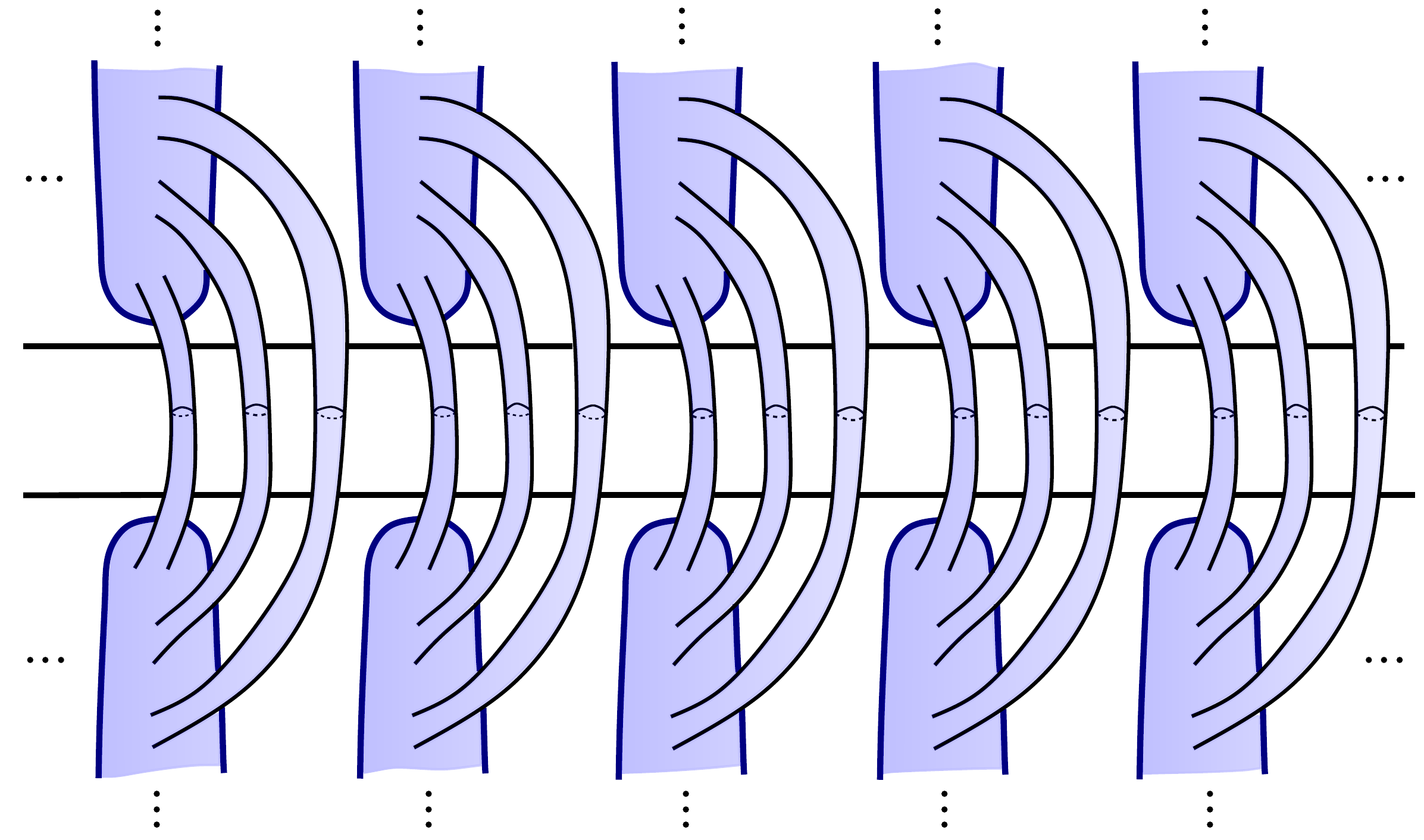}
\caption{Shown in the shaded regions are disjoint representatives of the standard pieces for the 2-sliced Loch Ness monster.}
\label{2sliced}
\end{figure}

Let $K_n$ denote a compact surface with zero genus and $n$ boundary components when $n$ is finite. Let $K_\infty$ denote the surface with 0 genus, no noncompact boundary components, and exactly two ends that are accumulated by compact boundary components. To construct an $n$-sliced Loch Ness monster with $n \geq 2$ and possibly infinite, first start with a disjoint union of $n$ copies of the closed upper half-plane. Now for each interior integer point in the closed upper half-plane apply the following construction: first, remove a small open ball about the integer point in each of the $n$ copies, then take a copy of $K_n$ and attach $\partial K_n$ to the resulting boundary components. When $n$ is infinite, we arrange the closed upper half-planes according to a $\Z$-index. We then equip the components of $\partial K_\infty$ with the natural $\Z$-index and apply the attaching procedure respecting the two indexes.

To construct the standard pieces in these cases, let $M_i$ be the subsurface bounded by the $n$ copies of the lines $x=i-\frac{1}{2}$ and $x=i+\frac{1}{2}$ for $i \in \Z$. See Figure \ref{2sliced} for an example where a small open neighborhood of the boundary has been removed from each $M_i$ to emphasize the standard pieces.

\section{Tools} \label{tools}

We now discuss the various tools needed in Section \ref{reverse} to show that $\PMCG(S)$ and $\PMCGcc{S}$ have automatic continuity for certain surfaces with noncompact boundary. These groups are related by the work of Aramayona--Patel--Vlamis in the compact boundary case \cite{APV2017}, and this was extended by the author to the noncompact boundary case \cite{Dickmann_2023}:

\begin{lemma} \label{structure}
 Let $S$ be an infinite-type surface. $$\PMCG(S) = \overline{\PMCG_c(S)} \rtimes H$$ where $\displaystyle H \cong \Z^{n-1}$ when there is a finite number $n > 1$ of ends of $S$ accumulated by genus, $\displaystyle H \cong \Z^\omega$ when there are infinitely many ends accumulated by genus, and $H$ trivial otherwise. 
\end{lemma}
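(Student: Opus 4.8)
The plan is to exhibit a continuous homomorphism $\Phi\colon \PMCG(S)\to H$ whose kernel is exactly $\overline{\PMCG_c(S)}$ and which admits a section, so that the group splits as the claimed semidirect product. The target records how a mapping class redistributes genus among the ends accumulated by genus. Enumerate these ends as $e_1,e_2,\dots$. For each $e_i$ fix a clean separating curve $c_i$ (or, in the noncompact boundary setting, an essential arc) cutting off a one-ended neighborhood of $e_i$ that carries all of its local genus; given $f\in\PMCG(S)$, the curve $f(c_i)$ is again such a curve, and the finite signed genus of the region it cobounds with $c_i$ defines the $i$-th coordinate $\sigma_i(f)\in\Z$. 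One checks this is independent of the chosen curves and additive in $f$, so $\Phi=(\sigma_i)$ is a homomorphism. Because each $\sigma_i(f)$ is computed inside a fixed finite-type subsurface, $\Phi$ is locally constant, hence continuous (with each $\Z$ discrete and $\Z^\infty$ carrying the product topology). When there are finitely many ($n$) ends accumulated by genus, any homeomorphism conserves the total redistributed genus, so the coordinates satisfy the single relation $\sum_i\sigma_i(f)=0$ and the image is $\Z^{n-1}$; with infinitely many ends this finite conservation law is vacuous and the image is the full product $\Z^\infty$.

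Next I would identify the kernel with $\overline{\PMCG_c(S)}$. The inclusion $\overline{\PMCG_c(S)}\subseteq\ker\Phi$ is the easy direction: a compactly supported class fixes every $c_i$ lying outside its support up to isotopy, so it shifts no genus, and since $\ker\Phi$ is closed it contains the whole closure. The substantive inclusion is $\ker\Phi\subseteq\overline{\PMCG_c(S)}$. Given $f$ with $\Phi(f)=0$, I would fix a principal exhaustion $S_1\subseteq S_2\subseteq\cdots$ and inductively produce compactly supported classes $g_k$ agreeing with $f$ on $S_k$; the vanishing of $\Phi$ is precisely the obstruction whose triviality allows one to isotope the restriction of $f$ to each complementary piece to be the identity near the corresponding ends, after which a change-of-coordinates argument supplies $g_k$. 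The $g_k$ then converge to $f$ in the compact-open topology, giving $f\in\overline{\PMCG_c(S)}$. In the presence of noncompact boundary this correction is run on the pieces produced by Lemma \ref{fulldecomp}, namely sliced Loch Ness monsters and disks with handles with compact boundary added, which is exactly where the author's extension \cite{dickmann2021mapping} of \cite{APV2017} is needed.

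For the splitting I would realize $H$ by handle shifts in the sense of Patel--Vlamis: for a pair of adjacent ends accumulated by genus, a homeomorphism supported in a bi-infinite chain of handles accumulating to those ends that translates the chain by one handle maps under $\Phi$ to a generator. A suitable set of $n-1$ handle shifts generates an abelian subgroup mapping isomorphically onto $\Z^{n-1}$ in the finite case. In the infinite case one must realize the full product $\Z^\infty$ rather than only the direct sum of individual handle shifts; this is possible because one can choose the handle shifts with supports escaping to the ends, so that an arbitrary infinite product of them converges in the compact-open topology to an honest homeomorphism, yielding a continuous section $\Z^\infty\hookrightarrow\PMCG(S)$. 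Since conjugation preserves compact support, $\PMCG_c(S)$ and hence its closure is normal, so combining the section with the kernel computation gives $\PMCG(S)=\overline{\PMCG_c(S)}\rtimes H$; the product is only semidirect, not direct, because a handle shift conjugates a compactly supported class to one with translated support.

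I expect the main obstacle to be the inclusion $\ker\Phi\subseteq\overline{\PMCG_c(S)}$, and in particular carrying out the inductive correction when $S$ has noncompact boundary. There the separating curves defining $\Phi$ must be replaced by essential arcs, and the standard genus-comparison and subsurface-projection machinery available in the closed case can degenerate, so one must instead exploit the decomposition of Lemma \ref{fulldecomp} and the standard-piece structure of the sliced Loch Ness monsters to push the support of $f$ off larger and larger subsurfaces.
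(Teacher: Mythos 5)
A preliminary remark: the paper does not prove this lemma at all --- it is imported verbatim from Aramayona--Patel--Vlamis \cite{APV2017} in the compact boundary case and from \cite{dickmann2021mapping} in the noncompact boundary case --- so there is no in-paper argument to compare against. Your proposal reconstructs the APV strategy (flux homomorphisms to $\Z$, identification of the kernel with $\overline{\PMCG_c(S)}$ by an exhaustion-and-correction argument, and a splitting realized by handle shifts with escaping supports), which is indeed the right skeleton.

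There is, however, a genuine gap in your construction of $\Phi$. You index the coordinates by the ends accumulated by genus and define $\sigma_i$ using a curve (or arc) cutting off a one-ended neighborhood of $e_i$. Such a curve exists only when $e_i$ is isolated among the ends accumulated by genus. That set is a closed subset of the compact end space, so whenever it is infinite --- which is exactly the case in which the lemma asserts $H\cong\Z^\infty$ --- it has accumulation points, and those ends admit no neighborhood of the kind you require; in the extreme case of the blooming Cantor tree (a Cantor set of ends, every one accumulated by genus) no end admits such a neighborhood, your $\Phi$ has no coordinates at all, and yet the lemma gives $H\cong\Z^\infty$. The repair, and what \cite{APV2017} actually does, is to index the homomorphisms not by individual ends but by separating curves, i.e.\ by clopen partitions of the space of ends accumulated by genus (equivalently by classes in the separating part of $H_1$): the genus flux across such a curve is well defined for every $f\in\PMCG(S)$, the Boolean algebra of clopen sets of a second-countable end space is countable, and a suitable independent family of partitions yields the full $\Z^\infty$. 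Your conservation-law count giving $\Z^{n-1}$ survives this reindexing when $n$ is finite. The remaining steps --- continuity of each flux, the hard inclusion $\ker\Phi\subseteq\overline{\PMCG_c(S)}$, convergence of infinite products of handle shifts, and the arc-based adaptation to noncompact boundary via Lemma \ref{fulldecomp} --- are consistent with the cited proofs, but the kernel inclusion is only sketched and is where most of the work in \cite{APV2017} and \cite{dickmann2021mapping} actually lives.
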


Note this implies $\PMCGcc{S} = \PMCG(S)$ for a one-ended surface such as a sliced Loch Ness monster. Since $\PMCG(S)=\MCG(S)$ in this case as well, we will always use the latter notation for one-ended surfaces or subsurfaces.

\subsection{Automatic Continuity} \label{autocont}
The standard approach to proving automatic continuity is to prove a stronger but more tractable condition due to Rosendal--Solecki \cite{RS2007}. We say a subset of a group is \textit{countably syndetic} if countably many left translates cover the entire group.  A topological group is \textit{Steinhaus} if there exists an integer $k$ such that, for every countably syndetic symmetric subset $W$ of $G$, $W^k$ contains an open neighborhood of the identity. 

\begin{proposition} [Rosendal--Solecki] \label{rosendal}
A Steinhaus Polish group has automatic continuity.
\end{proposition}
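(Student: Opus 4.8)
The plan is to unwind the definitions directly. Fix a homomorphism $\phi \colon G \to H$ into a separable topological group $H$; since $\phi$ is a homomorphism it suffices to prove continuity at the identity, so I would fix an arbitrary open neighborhood $V$ of $1_H$ and produce an open neighborhood of $1_G$ that $\phi$ maps into $V$. Let $k$ be the integer provided by the Steinhaus property of $G$.

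First I would use continuity of multiplication and inversion in $H$ to choose a symmetric open neighborhood $U$ of $1_H$ with $U^{2k} \subseteq V$. Then separability of $H$ gives a countable set $\{h_n\}$ with $H = \bigcup_n h_n U$. Define $W = \phi^{-1}(U^2)$; since $U$ is symmetric so is $U^2$, and hence $W$ is a symmetric subset of $G$.

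The crux is to verify that $W$ is countably syndetic. For each $n$ with $h_n U \cap \phi(G) \neq \emptyset$, pick $g_n \in G$ with $\phi(g_n) \in h_n U$. Given any $g \in G$, we have $\phi(g) \in h_n U$ for some such $n$, and then
\[
\phi(g_n^{-1} g) = \phi(g_n)^{-1}\phi(g) \in (h_n U)^{-1}(h_n U) = U^{-1} U = U^2,
\]
using that $U$ is symmetric. Hence $g_n^{-1} g \in W$, i.e. $g \in g_n W$, so the countably many left translates $\{g_n W\}$ cover $G$ and $W$ is countably syndetic.

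Now the Steinhaus property applies: $W^k$ contains an open neighborhood $\Omega$ of $1_G$. Since $\phi(W) \subseteq U^2$, we obtain $\phi(\Omega) \subseteq \phi(W^k) \subseteq (U^2)^k = U^{2k} \subseteq V$, which is exactly continuity of $\phi$ at $1_G$ and hence everywhere. The only real obstacle is the bookkeeping of powers: the factor-of-two blowup from comparing overlapping translates (which forces $W = \phi^{-1}(U^2)$ rather than $\phi^{-1}(U)$) must be reconciled with the exponent $k$ coming from the Steinhaus condition, which is why one preselects $U$ so that $U^{2k} \subseteq V$. Everything else is routine manipulation of symmetric neighborhoods; note that this implication genuinely uses only separability of the target $H$ together with the Steinhaus property of $G$.
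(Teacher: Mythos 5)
Your proof is correct and is essentially the standard Rosendal--Solecki argument; the paper itself does not prove this proposition but simply cites \cite{RS2007}, where the same reduction (build a symmetric countably syndetic set $W=\phi^{-1}(U^2)$ from a separable cover of the target, then apply the Steinhaus exponent) appears. Your closing remark is also accurate: the Baire category theorem plays no role in this implication itself, only in verifying the Steinhaus property or in facts like Proposition \ref{bct}.
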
          

We will also need a common fact used in automatic continuity proofs. This result, as well as the result of Rosendal and Solecki, follows from the Baire category theorem.

\begin{proposition} \label{bct}
    Let $G$ be a Polish group and $W \subset G$ a countably syndetic symmetric set. Then there exists a neighborhood $U$ of the identity in $G$ such that $W^2$ is dense in $U$.
\end{proposition}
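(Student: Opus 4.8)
The plan is to deduce everything from the Baire category theorem, exploiting that a Polish group is a Baire space. First I would unpack countable syndeticity: there exist $g_1, g_2, \ldots \in G$ with $G = \bigcup_{n} g_n W$. Since $G$ is completely metrizable it is a Baire space, so this countable union cannot consist entirely of nowhere dense sets; hence $\overline{g_n W}$ has nonempty interior for some $n$. Because left translation by $g_n^{-1}$ is a homeomorphism and $\overline{g_n W} = g_n \overline{W}$, it follows that $\overline{W}$ itself has nonempty interior, and I would fix a nonempty open set $V \subseteq \overline{W}$.

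Next I would propose the neighborhood $U := V^{-1} V$ as the candidate. This set is open, since $V^{-1} V = \bigcup_{v \in V} V^{-1} v$ is a union of right translates of the open set $V^{-1}$, and it contains the identity because $v^{-1} v = e$ for any $v \in V$.

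The heart of the argument is the inclusion $U \subseteq \overline{W^2}$, which I would establish by a direct approximation argument. Given $v_1, v_2 \in V$ and any open neighborhood $N$ of $v_1^{-1} v_2$, continuity of the map $(x,y) \mapsto x^{-1} y$ yields open neighborhoods $N_1 \ni v_1$ and $N_2 \ni v_2$, which I may shrink to lie inside $V$, with $N_1^{-1} N_2 \subseteq N$. Since $v_1, v_2 \in \overline{W}$, each $N_i$ meets $W$; choosing $w_i \in N_i \cap W$ gives $w_1^{-1} w_2 \in N \cap (W^{-1} W)$. Thus $v_1^{-1} v_2 \in \overline{W^{-1} W}$, and symmetry of $W$ gives $W^{-1} W = W^2$, so $U \subseteq \overline{W^2}$. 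Finally, since $U$ is open and contained in $\overline{W^2}$, intersecting an arbitrary open set with $U$ shows that $W^2 \cap U$ is in fact dense in $U$, which is the desired conclusion.

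I expect the only delicate point to be this approximation step, where openness of $V$ and continuity of the group operations must be used together carefully: closures do not in general satisfy $\overline{A}\,\overline{B} \subseteq \overline{AB}$, so the argument genuinely needs $V$ to be open (not merely contained in $\overline{W}$) in order to push the two approximating points $w_1, w_2$ simultaneously into $W$ while keeping their product near $v_1^{-1} v_2$. Everything else is bookkeeping with homeomorphisms and the Baire property.
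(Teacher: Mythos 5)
Your proof is correct and is precisely the standard Baire-category argument the paper has in mind (the paper states this proposition without proof, noting only that it ``follows from the Baire category theorem''). One small quibble with your closing remark: in any topological group one does have $\overline{A}\,\overline{B}\subseteq \overline{AB}$ by continuity of multiplication, so openness of $V$ is needed only to guarantee that $U=V^{-1}V$ is an open neighborhood of the identity, not to make the approximation step go through --- your explicit two-point approximation is simply an unwinding of that general containment applied to $A=W^{-1}$, $B=W$.
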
 

In some cases, we will rule out automatic continuity by ruling out a weaker property. A topological group is said to have the \textit{small index property} when any countable index subgroup is open. The following is well-known.
 
 \begin{proposition} \label{smallindex}
 A Polish group that has automatic continuity has the small index property.
 \end{proposition}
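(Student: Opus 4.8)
The plan is to exploit the canonical permutation action of $G$ on the cosets of the countable-index subgroup. Let $H \le G$ be a subgroup of countable index, and consider the left-multiplication action of $G$ on the countable set $G/H$ of left cosets. This action is encoded by a homomorphism $\phi \colon G \to \mathrm{Sym}(G/H)$, where $\mathrm{Sym}(G/H)$ is the group of all permutations of $G/H$. The goal is to realize $H$ as the preimage under $\phi$ of an open subgroup, so that continuity of $\phi$ forces $H$ to be open.

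First I would equip $\mathrm{Sym}(G/H)$ with the topology of pointwise convergence, viewing $G/H$ as a countable discrete set. With this topology $\mathrm{Sym}(G/H)$ is a separable topological group: a countable dense subset is provided by the permutations moving only finitely many cosets. I would also record that, in this topology, the stabilizer of any single coset is an open subgroup, since it is cut out by fixing the value of a permutation at a single point of $G/H$; this is immediate from the standard description of basic open sets in the pointwise-convergence topology.

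Next I would invoke automatic continuity of $G$. Since $\mathrm{Sym}(G/H)$ is separable, the homomorphism $\phi$ is continuous. The stabilizer in $G$ of the identity coset $eH$ is exactly $H$, and it coincides with the $\phi$-preimage of $\mathrm{Stab}(eH) \subset \mathrm{Sym}(G/H)$. As $\mathrm{Stab}(eH)$ is open and $\phi$ is continuous, $H = \phi^{-1}\bigl(\mathrm{Stab}(eH)\bigr)$ is open in $G$, which is precisely the small index property.

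The argument is essentially formal once the correct topology on $\mathrm{Sym}(G/H)$ is fixed, so there is no serious obstacle; the only point requiring genuine care is confirming that $\mathrm{Sym}(G/H)$ is a separable group in the pointwise-convergence topology and that its point stabilizers are open. When $G/H$ happens to be finite this is trivial, since $\mathrm{Sym}(G/H)$ is then a finite discrete group and every subgroup is open.
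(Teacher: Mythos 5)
Your argument is correct and is essentially the same as the paper's: both use the left-multiplication action on the countable set of left cosets, the openness of a point stabilizer in the full symmetric group with the pointwise-convergence (equivalently, compact-open) topology, and automatic continuity applied to the resulting homomorphism to pull back that open stabilizer to $H$. The only cosmetic difference is that you verify separability of $\mathrm{Sym}(G/H)$ via the finitely supported permutations, while the paper simply cites that $S_\infty$ is Polish.
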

 
 \begin{proof}
 Let $S_\omega$ denote the symmetric group on a countably infinite set. This is a Polish group with the compact-open topology. Any countable index subgroup $H$ determines a homomorphism $\phi$ to $S_\omega$ by the left multiplication action on left cosets. The subset of $S_\omega$ corresponding to permutations that fix $H$ is open. The pullback of this subset via $\phi$ is $H$, which is open by automatic continuity.  
 \end{proof}

\subsection{Sierpi\'nski lemmas} \label{sierp}

To apply the same techniques used by Mann \cite{Mann2019}, we need the following result of Sierpi\'nski.

\begin{lemma} [Sierpi\'nski] \label{sierpinski}
For an infinite countable set $\Lambda$, there is an uncountable collection of infinite subsets $\{\Omega_\alpha\}_{\alpha \in \Gamma}$ of $\Lambda$ such that any $\Omega_\alpha \cap \Omega_\beta$ is finite for $\alpha \neq \beta$.
\end{lemma}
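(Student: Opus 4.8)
The plan is to reduce the abstract countable set $\Lambda$ to a concrete combinatorial model rich enough to manufacture continuum-many almost disjoint subsets. Since any two countably infinite sets are in bijection, and both ``infinite'' and ``finite pairwise intersection'' are preserved under a bijection, it suffices to establish the statement for one convenient model and then transport the family back along a fixed identification. First I would take $\Lambda$ to be $2^{<\omega}$, the set of all finite binary strings, which is countably infinite.

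The key construction uses the set $2^\omega$ of infinite binary sequences as the index set $\Gamma$. For each branch $b \in 2^\omega$, I would define $\Omega_b \subseteq 2^{<\omega}$ to be the collection of all finite initial segments of $b$. Each $\Omega_b$ is infinite, since $b$ has an initial segment of every finite length, and the family $\{\Omega_b\}_{b \in 2^\omega}$ is uncountable because $2^\omega$ has cardinality $2^{\aleph_0}$ and distinct branches determine distinct sets of initial segments.

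The crux is verifying the finite intersection property. Given distinct branches $b \neq b'$, let $N$ be the least coordinate at which they disagree. Any element of $\Omega_b \cap \Omega_{b'}$ is a string that is simultaneously an initial segment of $b$ and of $b'$, so it must agree with both on all of its coordinates; hence its length is at most $N$. There are only finitely many binary strings of length at most $N$, so $\Omega_b \cap \Omega_{b'}$ is finite. Transporting $\{\Omega_b\}$ back to $\Lambda$ via the chosen bijection $\Lambda \cong 2^{<\omega}$ then yields the desired collection.

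I expect no serious obstacle here: the entire content lies in selecting the right encoding, and the binary-tree model makes the almost-disjointness transparent through the divergence of branches. An equivalent route would identify $\Lambda$ with $\mathbb{Q}$ and assign to each irrational $r$ the range of an injective sequence of rationals converging to $r$, with uniqueness of limits forcing the intersections to be finite; I would favor the tree model since it keeps the argument purely combinatorial and avoids any appeal to analysis.
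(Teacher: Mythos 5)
Your proof is correct, but it takes a different route from the paper. The paper identifies $\Lambda$ with $\mathbb{Q}$, indexes the family by the irrationals, and lets $\Omega_\alpha$ be (the range of) a sequence of rationals converging to $\alpha$; almost disjointness follows from uniqueness of limits. You instead identify $\Lambda$ with the full binary tree $2^{<\omega}$ and take $\Omega_b$ to be the set of initial segments of a branch $b\in 2^\omega$, with almost disjointness coming from the divergence of distinct branches at their first disagreement. Both are standard constructions of an almost disjoint family of size continuum. The paper's version is shorter to state and fits the pattern reused in its Lemma \ref{sierpinski2}, where the index sets $\gamma_n\mathbb{Q}$ and convergence to a common irrational are exploited again; your tree version is purely combinatorial, avoids any appeal to the topology of $\mathbb{R}$, and arguably makes the finiteness of $\Omega_b\cap\Omega_{b'}$ the most transparent (it is bounded explicitly by the number of strings of length at most the first disagreement index). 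Your verification of each step --- infinitude of each $\Omega_b$, injectivity of $b\mapsto\Omega_b$, and the finite-intersection bound --- is complete, and the reduction to a concrete model via a fixed bijection is handled correctly.
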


\begin{proof}
Identify $\Lambda$ with $\Q$ via a bijection, and let $\Gamma = \R \setminus \Q$. For any given $\alpha \in \Gamma$, let $\Omega_\alpha$ be any sequence of rational numbers converging to $\alpha$. 
\end{proof}

We also need an extension that allows us to apply the techniques to infinite unions. 
 
\begin{lemma} \label{sierpinski2}
For any product of infinite countable sets $\Lambda \times \Lambda^\prime$, there is an uncountable collection of infinite subsets $\{\Omega_\alpha\}_{\alpha \in \Gamma}$ of  $\Lambda \times \Lambda^\prime$ such that

\begin{enumerate}  [(i)]
    \item $\Omega_\alpha \cap \Omega_\beta$ is finite for all $\alpha \neq \beta$.
    \item $\Omega_\alpha \cap (\{\lambda\} \times \Lambda^\prime)$ is infinite for all $\lambda \in \Lambda$.
\end{enumerate} 
\end{lemma}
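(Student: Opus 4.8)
The plan is to build the family by hand rather than by quoting Lemma \ref{sierpinski} directly, because the two requirements pull against each other and the naive reduction fails. The first thing I would check is the obvious attempt: apply Sierpi\'nski's lemma to $\Lambda'$ to obtain an uncountable almost disjoint family $\{T_\alpha\}_{\alpha \in \Gamma}$ of infinite subsets of $\Lambda'$, and set $\Omega_\alpha = \Lambda \times T_\alpha$. Condition (ii) is then immediate, but condition (i) fails badly: $\Omega_\alpha \cap \Omega_\beta = \Lambda \times (T_\alpha \cap T_\beta)$, which is infinite whenever $T_\alpha \cap T_\beta \neq \emptyset$. This already isolates what I expect to be the main obstacle: any overlap that recurs in infinitely many of the columns $\{\lambda\} \times \Lambda'$ is automatically infinite, so to force finite pairwise intersections I must arrange that two distinct members genuinely agree in only finitely many columns, while still meeting every column cofinally.

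To achieve this I would spread the family along the branches of the infinite binary tree $2^{<\omega}$ --- the same ``paths converging to a limit'' idea underlying Sierpi\'nski's proof. Fix an enumeration $\Lambda = \{\lambda_1, \lambda_2, \dots\}$. To each finite binary string $s$ of length $m$ I assign a finite set $P_s \subset \Lambda \times \Lambda'$ containing exactly one point in each of the columns $\{\lambda_1\} \times \Lambda', \dots, \{\lambda_m\} \times \Lambda'$, and I insist that the sets $\{P_s : s \in 2^{<\omega}\}$ be pairwise disjoint. This is arranged by a greedy choice: process the (countably many) strings in order of nondecreasing length, and when defining $P_s$ with $|s| = m$, note that only finitely many points of $\Lambda \times \Lambda'$ have been used so far while each column is infinite, so $m$ fresh points are available. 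For each infinite branch $b \in 2^{\omega}$ I set $\Omega_b = \bigcup_{m \geq 1} P_{b \restriction m}$ and take $\Gamma = 2^{\omega}$, which is uncountable.

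The verification is then short. For (ii): for a fixed $\lambda_j$, every prefix $b \restriction m$ with $m \geq j$ contributes a point in column $\lambda_j$, and these points are distinct because the $P_s$ are pairwise disjoint, so $\Omega_b$ meets $\{\lambda_j\} \times \Lambda'$ infinitely. For (i): if $b \neq b'$ first differ at position $n$, then $b \restriction m = b' \restriction m$ precisely when $m < n$; since distinct strings have disjoint $P$-sets, this gives $\Omega_b \cap \Omega_{b'} = \bigcup_{1 \leq m < n} P_{b \restriction m}$, a finite union of finite sets. The only genuinely delicate point is the bookkeeping that guarantees disjointness of all the $P_s$ at once while still forcing each column to be hit along every branch; once the greedy enumeration is fixed, both conditions drop out, and I would expect a write-up to spend its effort only on making that enumeration explicit.
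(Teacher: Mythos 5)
Your construction is correct, and it takes a genuinely different route from the paper. The paper upgrades its own proof of Lemma \ref{sierpinski}: it fixes irrationals $\gamma_1,\gamma_2,\dots$ linearly independent over $\Q$, identifies the column $\{n\}\times\Lambda'$ with the dense set $\gamma_n\Q$, and for each $\alpha\in\R\setminus\bigcup_n\gamma_n\Q$ takes $\Omega_\alpha$ to be a sequence converging to $\alpha$ that meets every $\gamma_n\Q$ infinitely often; almost disjointness then comes from the fact that sets accumulating only at distinct limits share finitely many points. You instead index the family by branches of the binary tree $2^{<\omega}$, attaching to each node $s$ of length $m$ a finite transversal $P_s$ of the first $m$ columns, with all the $P_s$ pairwise disjoint by a greedy choice, and setting $\Omega_b=\bigcup_m P_{b\restriction m}$. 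Both arguments are complete. Yours is more elementary and self-contained --- it avoids real analysis and the (unstated but needed) verification that one can choose a single sequence converging to $\alpha$ whose intersection with each dense set $\gamma_n\Q$ is infinite while still accumulating only at $\alpha$ --- and it makes condition (i) exact rather than asymptotic, since $\Omega_b\cap\Omega_{b'}$ is literally the union of the $P_s$ over the common prefix. What the paper's version buys is brevity: having already proved Lemma \ref{sierpinski} by the converging-sequence trick, the generalization is a one-line modification rather than a fresh construction. Either proof would serve.
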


\begin{proof}
Let $\gamma_1, \gamma_2, ...$ be an infinite sequence of irrationals independent over $\Q$. Now $\{\gamma_n \Q\}_{n=1}^\infty$ is a collection of pairwise disjoint dense subsets of the reals. Identify $\Lambda$ via a bijection with $\N$, and then identify each $\{n\} \times \Lambda^\prime$ by a bijection with $\gamma_n \Q$. Let $\Gamma = \R \setminus (\bigcup _{n=1}^\infty \gamma_n \Q)$. For any given $\alpha \in \Gamma$, choose any sequence of numbers in $\bigcup_{n=1}^\infty \gamma_n \Q$ converging to $\alpha$ that includes infinitely many entries of $\gamma_n \Q$ for all $n$, and then let $\Omega_{\alpha}$ be the corresponding set of tuples $(n, \gamma_n q_n)$ where $q_n \in \Q$.
\end{proof}

\subsection{Tools of Mann} \label{mann}

Using the previously discussed tools, we will now introduce the main lemmas for our automatic continuity proofs, Lemmas \ref{cofinitelem} and \ref{cofinitelem2}. The proofs of these lemmas follow an argument from Section 4 of the paper of Mann \cite{Mann2019}, and rely on a modified version of Lemma 3.2 from the same paper. The original lemma was applied to the homeomorphism group of a manifold, but the proof can be adapted to work in the mapping class group setting. 

We say a collection of disjoint subsurfaces $\{S_i\}$ of $S$ is \textit{admissible} when any product $\prod_i f_i$ is a well-defined homeomorphism of $S$ for $f_i$ supported in $S_i$. This condition is required in the statement of the following lemma since Mann's proof uses infinite products of homeomorphisms, and these may not always be well-defined.  For example, consider a sequence of disjoint essential annuli which all essentially intersect some compact subsurface. An infinite product of Dehn twists about these annuli is not a well-defined mapping class of the surface since, even after isotopy, the annuli accumulate at some point of the compact subsurface, and there is no continuous extension of the infinite twist to this point.  

\begin{lemma} [Mann] \label{techlem}
Let $S$ be an infinite-type surface, and $W \subset \PMCGcc{S}$ a countably syndetic symmetric set. Let $\mathcal{A}$ be an infinite admissible collection of disjoint closed subsurfaces of $S$ satisfying: 
\begin{enumerate}
\item There exists an infinite admissible collection of disjoint subsurfaces $U_i$ of $S$ such that each $U_i$ contains an infinite family of  disjoint subsurfaces belonging to $\mathcal{A}$. 
\item There exists $p \in \N$ such that, for each $A \in \mathcal{A}$, the subgroup of $\PMCGcc{S}$ consisting of maps with support in $A$, denoted $\PMCG(A)$, has commutator length bounded by $p$. 
\end{enumerate}
Then there exists $A \in \mathcal{A}$ such that $\PMCG(A)$ is contained in $W^{8p}$.  
\end{lemma}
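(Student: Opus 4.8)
The plan is to combine three ingredients: the commutator-length hypothesis (2), the well-definedness of infinite products guaranteed by admissibility and hypothesis (1), and the countable syndeticity of $W$ fed through the Sierpi\'nski Lemmas \ref{sierpinski} and \ref{sierpinski2}.

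First I would reduce everything to a statement about a single commutator. By hypothesis (2), for the piece $A$ we eventually produce, every element of $\PMCG(A)$ is a product of at most $p$ commutators $[a,b]$ with $a,b \in \PMCG(A)$. Hence it suffices to find $A \in \mathcal{A}$ such that every such commutator lies in $W^{8}$; then $\PMCG(A) \subseteq (W^{8})^{p} = W^{8p}$, using that $W$ is symmetric so that all these powers are symmetric. Thus the entire weight of the argument rests on the bound $W^{8}$ for a single commutator, and the constant $8$ will be tracked as four factors each landed in $W^{2}$.

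Next I would build the self-similar configuration. Using hypothesis (1) and admissibility, fix a base piece and produce an infinite family $A_{0},A_{1},A_{2},\dots \in \mathcal{A}$ of disjoint closed copies, distributed among the $U_{i}$ so that every relevant infinite product is a well-defined element of $\PMCGcc{S}$, together with a \emph{shift} mapping class $t$ satisfying $t(A_{n})=A_{n+1}$. Transporting $a,b \in \PMCG(A_{0})$ by the shift gives disjointly supported conjugates $c_{n}=t^{n}[a,b]t^{-n}$ of $c_{0}=[a,b]$; since the $c_{n}$ have disjoint supports they commute, so the infinite product $\hat{c}=\prod_{n\ge 0} c_{n}$ is defined. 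The engine of the argument is the telescoping identity $t\,\hat{c}\,t^{-1}=\prod_{n\ge 1}c_{n}=c_{0}^{-1}\hat{c}$, i.e. $c_{0}=\hat{c}\,t\,\hat{c}^{-1}t^{-1}=[\hat{c},t]$, which expresses the single commutator through one infinite product and the shift. I would then run the Sierpi\'nski--syndeticity collision to place these factors into bounded powers of $W$: for each infinite index set $\Omega$ the product $\prod_{n\in\Omega}c_{n}$ is defined, Lemma \ref{sierpinski} (or Lemma \ref{sierpinski2}, when the distribution across the $U_{i}$ must itself be controlled) yields an uncountable almost-disjoint family $\{\Omega_{\alpha}\}$, and countable syndeticity writes $\PMCGcc{S}$ as a countable union of left translates of $W$. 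By pigeonhole two of the uncountably many products share a translate, so their ratio lies in $W^{2}$; because $\Omega_{\alpha}\cap\Omega_{\beta}$ is finite and the $c_{n}$ commute, this ratio is itself an infinite product of the $c_{n}^{\pm 1}$. The scheme is designed so that the same construction applied to displaced copies of the shift places $t$ in $W^{2}$, and so that these memberships combine through $c_{0}=[\hat{c},t]$, with $\hat{c},\hat{c}^{-1}\in W^{2}$ and $t,t^{-1}\in W^{2}$, to give $c_{0}\in W^{8}$. The surviving piece of the collision is the $A$ named in the conclusion, which is why the statement is existential in $A$.

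The step I expect to be the main obstacle is precisely this last bookkeeping: the Sierpi\'nski ratio is an infinite product over an almost-disjoint symmetric difference rather than the clean product $\hat{c}$, and one must convert it into the exact telescoping form while (i) keeping the number of $W^{2}$ factors at four, so the constant is genuinely $8$, and (ii) ensuring that every auxiliary conjugator and the shift $t$ are themselves absorbed into bounded powers of $W$ rather than silently assumed to lie in $W$. This is the point at which Mann's homeomorphism-group argument must be adapted to the mapping class group setting, and where admissibility together with hypothesis (1) is indispensable, since an infinite product of mapping classes whose supports accumulate need not define a mapping class at all.
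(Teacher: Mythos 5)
Your proposal does not follow the route the paper relies on, and the route you take has a gap that I do not think can be closed. The paper gives no self-contained proof of this lemma: it is quoted as a modification of Lemma 3.2 of Mann, whose argument runs by contradiction --- choose a witness $f_A \notin W^{8p}$ for every $A \in \mathcal{A}$, write each as a product of $p$ commutators using hypothesis (2), form uncountably many ``diagonal'' infinite products of the commutator entries across the disjoint family $\{U_i\}$ (this is exactly where admissibility and hypothesis (1) enter), pigeonhole the resulting $2p$-tuples of left translates of $W$ over the countable syndetic cover, and then use commutator identities for disjointly supported elements to force some witness into $W^{8p}$. The Sierpi\'nski lemmas are not part of this step at all; in the paper they appear only afterwards, in Lemmas \ref{cofinitelem} and \ref{cofinitelem2}, to upgrade ``some $A \in \mathcal{A}$'' to a cofinite union of standard pieces. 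Note also that the existential form of the conclusion (``there exists $A$'') is forced by the pigeonhole; an argument like yours that fixes the target $A$ at the outset should already be suspicious.

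The concrete gaps in your scheme are the following. First, the shift $t$ with $t(A_n)=A_{n+1}$ is not supplied by the hypotheses: $\mathcal{A}$ is an abstract admissible collection, the members of $\mathcal{A}$ inside a given $U_i$ need not be related by any ambient homeomorphism, and even when such a $t$ exists as a homeomorphism it need not represent an element of $\PMCGcc{S}$ (shifting infinitely many disjoint pieces toward an end is typically not approximable by compactly supported maps). Second, and fatally, there is no mechanism for placing the single, prescribed elements $t$ and $\hat{c}$ in $W^{2}$. Countable syndeticity only yields membership in $W^{2}$ for a \emph{difference} of two members of an uncountable family landing in the same translate; it says nothing about a fixed element, and the Sierpi\'nski collision produces $\bigl(\prod_{n\in\Omega_\alpha}c_n\bigr)^{-1}\prod_{n\in\Omega_\beta}c_n \in W^{2}$, a signed product over an almost-disjoint symmetric difference, not the specific telescoping product $\hat{c}=\prod_{n\ge 0}c_n$ that your identity $c_0=[\hat{c},t]$ requires. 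You flag this as ``bookkeeping,'' but it is the entire content of the lemma. Finally, if your argument did go through, it would apply verbatim to an arbitrary $g\in\PMCG(A_0)$ in place of the commutator $c_0$ (the swindle identity $g=[\hat{g},t]$ needs no commutator decomposition), yielding $\PMCG(A)\subset W^{8}$ for \emph{every} suitable $A$ with no use of hypothesis (2); the presence of the uniform commutator width $p$ and the constant $8p$ in the statement is a clear signal that the intended mechanism is the one sketched above, in which each of the $p$ commutators is assembled from two elements of $W^{2}$ at a cost of $W^{8}$ apiece.
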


We will also use the following result of the author \cite{Dickmann_2023}.

\begin{lemma}\label{uniformperfect}
Let $S$ be a disk with handles. Then every element in $\PMCGcc{S}$ can be written as the product of two commutators. 
\end{lemma}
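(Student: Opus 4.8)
The plan is to prove uniform perfectness with bound two via an infinite swindle adapted to the self-similar structure of a disk with handles, organized around the standard pieces $M_i$ introduced in Section \ref{moieties}. Fix an end $e$ of $S$ accumulated by genus, and let $M_0, M_1, M_2, \ldots$ be a half-infinite chain of disjoint standard pieces escaping to $e$. The starting observation is that any $f \in \PMCGcc{S}$ is, by definition, approximable by compactly supported maps; after a fragmentation I would first reduce to the case where $f$ is an admissible product $f = \prod_{i \geq 0} g_i$ with each $g_i$ supported in $M_i$ and the supports escaping to $e$. This is where Lemma \ref{fulldecomp} enters: cutting $S$ into sliced Loch Ness monster pieces lets me organize the support of $f$ along such a chain marching to a single chosen end.

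The engine of the argument is the Mazur (Eilenberg) swindle. Given a homeomorphism $\phi$ of $S$ whose iterates shift the chain toward $e$ so that the subsurfaces $\phi^n(M_i)$ have pairwise disjoint supports escaping every compact set, I form the infinite product $F = \prod_{n \geq 0} \phi^n f \phi^{-n}$. Admissibility of this product, in the sense introduced before Lemma \ref{techlem}, is exactly the assertion that the telescoped supports accumulate only at $e$ and hence define a genuine mapping class, so that $F \in \PMCGcc{S}$ is well-defined. The telescoping identity $\phi F \phi^{-1} = f^{-1} F$ then yields $f = [F, \phi]$; were $\phi$ itself an element of $\PMCGcc{S}$ this would already express $f$ as a single commutator.

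The main obstacle, and the reason the bound is two rather than one, is that the requisite shift $\phi$ is not directly available in $\PMCGcc{S}$. Since $S$ has noncompact boundary running out to $e$, a genuine translation of the chain cannot fix $\partial S$ pointwise; and if $S$ has several ends, by Lemma \ref{structure} a shift relocating genus between ends would represent a nontrivial class in the factor $H \cong \Z^{n-1}$ and thus fall outside $\PMCGcc{S}$ altogether. I would resolve this by pushing only toward the single fixed end $e$, keeping the whole construction inside $\PMCGcc{S}$, and by splitting the displacement into two pieces whose boundary defects cancel, equivalently by running the swindle on the even-indexed and odd-indexed standard pieces separately. This produces two shift-like elements $\phi_1, \phi_2 \in \PMCGcc{S}$ together with a factorization $f = [F_1, \phi_1][F_2, \phi_2]$ into two commutators.

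Finally, I would verify the reduction and the admissibility claims carefully. For the reduction, I would use that every element of $\PMCGcc{S}$ is a limit of compactly supported maps, together with the decomposition of $S$ along the chain of standard pieces, to write $f$ in the fragmented form above, checking that the rearrangement needed to interleave the even and odd pieces is itself realized by an element of $\PMCGcc{S}$. For admissibility, I would confirm that the shifted supports $\phi^n(\mathrm{supp}\, g_i)$ leave every compact set, so that the infinite products $F_1, F_2$ are continuous at $e$ and hence honest mapping classes fixing the boundary. I expect the delicate point to be precisely this bookkeeping of supports near the noncompact boundary: ensuring that the two interleaved swindles simultaneously fix $\partial S$ pointwise and remain in $\PMCGcc{S}$, which is what ultimately forces, and exactly accounts for, the two commutators.
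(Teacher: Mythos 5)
The paper does not actually prove this lemma here --- it is imported from the author's earlier work \cite{dickmann2021mapping} --- so your proposal can only be compared against the strategy of that cited proof, which is indeed of the same genus as yours: fragmentation plus an infinite swindle. However, your accounting has a genuine gap at the reduction step. Fragmentation (Lemma \ref{fraglemma}) does \emph{not} let you write a general $f \in \PMCGcc{S}$ as a single admissible product $\prod_{i} g_i$ of disjointly supported pieces; it writes $f = hg$ where $h$ and $g$ are \emph{each} such a product, but with the two support families $\{K_i\}$ and $\{C_i\}$ overlapping each other. That irreducible factorization into two fragmented elements is precisely where the ``two'' in ``two commutators'' comes from: each of $h$ and $g$ becomes a single commutator via the swindle $F=\prod_{n\ge 0}\phi^nh\phi^{-n}$, $h=[F,\phi]$. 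Your proposal instead locates the factor of two in an even/odd splitting of the shift designed to make ``boundary defects cancel,'' which is not a coherent mechanism; and if you needed both the two-fold fragmentation and the two-fold even/odd splitting, you would land at four commutators, not two.

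A second, related problem is your choice of supports. The standard pieces $M_i$ meet the noncompact boundary of $S$ in arcs, so no homeomorphism fixing $\partial S$ pointwise can satisfy $\phi(M_i)\cap M_i=\emptyset$, let alone make the iterates $\phi^n(M_i)$ pairwise disjoint: the boundary arcs are fixed. This is exactly why the fragmentation lemma produces \emph{compact} supports $K_i$, $C_i$ bounded by single essential simple closed curves in the interior, with $S\searrow \cup_i K_i$ homeomorphic to $S$ with compact boundary added --- that complement is what lets you displace $\cup_i K_i$ infinitely often toward an end and run the swindle. Finally, your worry that the shift $\phi$ falls outside $\PMCGcc{S}$ is misdiagnosed: once you push toward a single end $e$ and work inside a one-ended sliced Loch Ness monster neighborhood $N$ of $e$, Lemma \ref{structure} gives $\MCG(N)=\PMCGcc{N}\subset\PMCGcc{S}$, so any boundary-fixing shift supported there automatically qualifies; no splitting is needed for that purpose. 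With the reduction corrected to $f=hg$ and the supports taken from fragmentation rather than the raw standard pieces, your swindle does yield the stated bound of two.
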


\begin{lemma} \label{cofinitelem}
Let $S$ be a surface containing a subsurface $M$ homeomorphic to any sliced Loch Ness monster, $\{M_n\}$ the standard pieces of this sliced Loch Ness monster, and $W \subset \PMCGcc{S}$ a countably syndetic symmetric set. Then there is a finite set $F$ such that $$\displaystyle\prod_{n \notin F} \MCG(M_n) \subset W^{80}$$
\end{lemma}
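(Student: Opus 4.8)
The plan is to deduce the statement from Mann's technical lemma (Lemma \ref{techlem}) together with the self-similarity of sliced Loch Ness monsters and a Mazur-type swindle, while tracking the power of $W$ carefully so as to land in $W^{80}$.

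First I would set up the collection to which Lemma \ref{techlem} applies. The individual standard pieces $M_n$ are not disks with handles, so Lemma \ref{uniformperfect} does not apply to them and $\MCG(M_n)$ need not be uniformly perfect; hypothesis (2) would fail if we fed the $M_n$ directly into Lemma \ref{techlem}. Instead I would group the $M_n$ into disjoint sub-sliced-Loch-Ness-monsters, each built from infinitely many standard pieces, each of which \emph{is} a disk with handles. Using the single-ended self-similarity of $M$ and Lemma \ref{sierpinski2} to produce the nested admissible family $\{U_i\}$ required by hypothesis (1), and using Lemma \ref{uniformperfect} to get commutator length $p=2$ for hypothesis (2), Lemma \ref{techlem} yields a good sub-monster $A^\ast$ with $\MCG(A^\ast)\subset W^{8p}=W^{16}$. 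The point of passing to sub-monsters is that $\MCG(A^\ast)$ already contains every admissible infinite product of maps supported in the standard pieces inside $A^\ast$, so one application controls an infinite sub-product, not merely a single factor.

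Next I would use a good sub-monster as a fixed reservoir of ``room'' to absorb an arbitrary admissible product $g=\prod_{n\notin F}f_n$. After discarding the finitely many pieces meeting a chosen separating arc, split the index set into two infinite sets $I\sqcup J$ so that $A:=\bigcup_{n\in I}M_n$ and $A':=\bigcup_{n\in J}M_n$ are again sub-sliced-Loch-Ness-monsters and $g=g_A\,g_{A'}$, with $g_A,g_{A'}$ supported in $A,A'$; arrange the decomposition so that $A'$ and the regions used below are carried into good sub-monsters. For the half $g_A$ I would run the swindle: choose a shift-type homeomorphism $b$ whose iterated images $b^k(\mathrm{supp}\,g_A)$, $k\ge 1$, are pairwise disjoint and march off toward the unique end inside a good sub-monster, set $G=\prod_{k\ge1}b^{k}g_A b^{-k}$ (admissible because the supports accumulate only at the end), and observe the telescoping identity $b^{-1}Gb=g_A G$, which rearranges to $g_A=b^{-1}GbG^{-1}$. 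Since $G$ and $b$ are supported in good sub-monsters, each of $b^{-1},G,b,G^{-1}$ lies in $W^{16}$, so $g_A\in W^{64}$; combined with $g_{A'}\in W^{16}$ this yields $g\in W^{80}$.

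The main obstacle is the step glossed in the previous paragraph: arranging that all three ingredients --- the ``easy half'' $A'$, the shift $b$, and the telescoped product $G$ --- are supported in sub-monsters that are genuinely \emph{good}. Lemma \ref{techlem} guarantees only the existence of one good sub-monster, and goodness is \emph{not} preserved under conjugation, since $W$ need not be conjugation-invariant; so I cannot simply transport goodness to a prescribed region. Resolving this requires choosing the admissible collection for Lemma \ref{techlem} self-similarly enough (again via Lemma \ref{sierpinski2}) that the reservoir simultaneously accommodates the reindexed copies of $A'$, of the tower supporting $G$, and of the support of $b$, so that each of the five factors above really lands in a group of the form $\MCG(A^\ast)\subset W^{16}$. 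The remaining bookkeeping --- verifying admissibility of the infinite swindle product and checking that the sparse sub-monsters cut out by the arcs are honest sliced Loch Ness monsters rather than surfaces with spurious compact boundary --- is routine given Lemma \ref{fulldecomp} and the models of Section \ref{moieties}.
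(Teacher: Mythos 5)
There is a genuine gap, and it sits exactly where you flagged your ``main obstacle'': the swindle identity $g_A=b^{-1}GbG^{-1}$ only helps if the shift $b$ itself lies in a controlled power of $W$, and nothing in your setup provides that. Lemma \ref{techlem} hands you a single good collection $A^\ast=\bigcup_{n\in\Lambda}M_{2n}$ with $\MCG(A^\ast)\subset W^{16}$, where $\Lambda$ is an infinite set you do not get to choose. You can arrange the tower $\bigcup_{k\ge1}b^k(\operatorname{supp}g_A)$ to sit inside $A^\ast$, which puts $G$ in $W^{16}$; but $b$ must move $A$ (a cofinite union of standard pieces, hence not contained in $A^\ast$) into $A^\ast$, so $\operatorname{supp}(b)\not\subset A^\ast$ and $b$ is just an arbitrary element of $\PMCGcc{S}$. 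Since $W$ is not conjugation-invariant, $b^{-1}Gb$ is then in no power of $W$ that you can name, and the claim ``each of $b^{-1},G,b,G^{-1}$ lies in $W^{16}$'' is unjustified. The same problem afflicts the ``easy half'' $g_{A'}$: it is supported on roughly half of \emph{all} the standard pieces outside $F$, not on the sparse subset indexed by $\Lambda$, so $g_{A'}\in\MCG(A^\ast)$ fails. Choosing the admissible collection ``self-similarly enough'' cannot repair this, because the issue is not the shape of the reservoir but the absence of any bound on the conjugator.

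The paper's proof supplies precisely the missing mechanism. After obtaining $\MCG(A_\Lambda)\subset W^{16}$, it invokes Sierpi\'nski's lemma to build an \emph{uncountable} almost-disjoint family $\{\Omega_\alpha\}$ of infinite subsets of $\Lambda$ and, for each $\alpha$, a candidate displacing homeomorphism $f_\alpha$. Countable syndeticity of $W$ then forces two of these, $f_\alpha$ and $f_\beta$, into the same left translate, so $f_\beta^{-1}f_\alpha\in W^2$ --- this is how one manufactures a conjugator that is \emph{a priori} in a small power of $W$. Because $\Omega_\alpha\cap\Omega_\beta$ is finite, this element carries all but finitely many of the even pieces into $A_\Lambda$, giving $\MCG(X_i)\subset W^{2}W^{16}W^{2}=W^{20}$, hence $W^{40}$ for the even pieces and $W^{80}$ after repeating for the odd ones. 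Your proposal uses Sierpi\'nski-type input only to build the $U_i$ for Lemma \ref{techlem} (where a plain partition of $\Z$ suffices) and omits the pigeonhole-on-translates step entirely; without it, the argument does not close.
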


\begin{proof}
Apply Lemma \ref{techlem} with $\mathcal{A}$ consisting of subsurfaces of the form $A_{\Lambda} = \cup_{n \in \Lambda} M_{2n}$ for some infinite set $\Lambda \subset \Z$. Any collection of the $M_i$ is admissible since we can extend a homeomorphism on the collection via the identity map to $S$. Note $\mathcal{A}$ satisfies the hypotheses of the lemma, since 

\begin{enumerate} [(i)]
    \item We can write $\Z$ as a countable disjoint union of infinite sets $\Lambda_i$, and define $U_i$ to be $\bigcup_{n \in \Lambda_i} M_{2n}$. Each such set contains a countable union of disjoint elements of $\mathcal{A}$.
    \item Lemma \ref{uniformperfect} implies the same statement for the mapping class group of a disjoint union of sliced Loch Ness monsters. Thus, any element supported in $A_{\Lambda}$ may be written as the product of two commutators.
    
\end{enumerate}

We conclude that for some such subsurface $A_{\Lambda} \in \mathcal{A}$, we have $\MCG(A_{\Lambda}) \subset W^{16}$. Now we apply Lemma \ref{sierpinski}. Let $\{\Omega_\alpha\}$ be an uncountable collection of infinite subsets of $\Lambda$ such that $\Omega_\alpha \cap \Omega_\beta$ is finite for all $\alpha \neq \beta$. Note we may assume $\Lambda$ and each  $\Omega_\alpha$ contain infinitely many negative and positive integers.

Since all homeomorphisms are assumed to fix the boundary pointwise, we first modify each $M_i$ by deleting a small regular open neighborhood of the $x$-axis so that we can move them into one another with an appropriate homeomorphism. For each $\alpha$, let $f_\alpha$ be a homeomorphism supported in $M$ with the following property. For each $n \in \Omega_\alpha$, let $f_\alpha(M _{2n})$ be the smallest connected subsurface containing the union of $M_{2n+1}, M_{2n+2},..., M_{2k-1}$ where $k \in \Omega_\alpha$ is the smallest element in $\Omega_\alpha$ larger than $n$, so that $f_\alpha$ maps $A_{\Omega_\alpha}$ into the complementary region. Also let $f_\alpha$ map the union of $M_{2n+1}, M_{2n+2},..., M_{2k-1}$ into $M_{2k}$. Note this homeomorphism exists by the change of coordinates principle. Since $\{\Omega_\alpha\}$ is uncountable, there are some $\alpha$ and $\beta$ such that $f_\alpha$ and $f_\beta$ are in the same left translate $gW$ for some $g \in \PMCGcc{S}$. Therefore, $f_\alpha^{-1}f_\beta$ and $f_\beta^{-1}f_\alpha$ are both in $W^2$.

 If $n \notin \Omega_\alpha$, then $f_\alpha(M_{2n}) \subseteq M_{2m}$ for some $m \in \Omega_\alpha$. If $m \notin \Omega_\beta$, then $f_\beta^{-1}f_\alpha(M_{2n})$ is contained in some $M_{2k}$ where $k \in \Omega_\beta$. Since $\Omega_\alpha \cap \Omega_\beta$ is finite, we conclude that, with the exception of finitely many values of $n \notin \Omega_\alpha$, the map $f_\beta^{-1}f_\alpha$ takes $M_{2n}$ into $A_{\Omega_\beta} \subset A_\Lambda$.

Reversing the role of $\alpha$ and $\beta$, the same argument shows that with only finitely many exceptions of $n \notin \Omega_\beta$, $f^{-1}_\alpha f_\beta$ takes every $M_{2n}$ into $A_\Lambda$. Let $F'$ be the union of these two exceptional sets of integers. Now write $\bigcup_{n \in \Z}M_{2n}$ as the union of $X_1 = \bigcup_{n \notin (\Omega_\alpha \cup F')} M_{2n}$, $X_2 = \bigcup_{n \notin (\Omega_\beta \cup F')} M_{2n}$, and $X_3 = \bigcup_{n \in F'} M_{2n}$.
\begin{gather*}
f_\beta^{-1}f_\alpha \MCG(X_1) (f_\beta^{-1}f_\alpha)^{-1} \subset \MCG(A_\Lambda) \subset W^{16}, \text{ and similarly} \\
f_\alpha^{-1}f_\beta \MCG(X_2) (f_\alpha^{-1}f_\beta)^{-1} \subset \MCG(A_\Lambda) \subset W^{16}.
\end{gather*}
It follows that $\MCG(X_1)$, $\MCG(X_2) \subset W^{20}$, so $\MCG(X_1 \cup X_2) \subset W^{40}$. Now we can complete the proof by repeating the above argument to the union of the odd $M_n$.
\end{proof}

We can strengthen Lemma \ref{cofinitelem} using the upgraded Sierpi\'nski lemma.

\begin{lemma} \label{cofinitelem2}
Let $S$ be any surface containing a countable admissible family of disjoint subsurfaces $\{S_n\}$ each homeomorphic to a sliced Loch Ness monster, and $W \subset \PMCGcc{S}$ a countably syndetic symmetric set. Let $\{M_{n,m}\}_{m \in \Z}$ be the collection of standard pieces for $S_n$. Then there is a finite set $F$ such that $$\displaystyle\prod_{(n,m) \notin F} \MCG(M_{n,m}) \subset W^{80}$$
\end{lemma}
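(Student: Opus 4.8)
The goal is to prove Lemma~\ref{cofinitelem2}, which is the ``two-index'' strengthening of Lemma~\ref{cofinitelem}. The plan is to run the exact same argument as in the proof of Lemma~\ref{cofinitelem}, but to replace the single-index Sierpi\'nski Lemma~\ref{sierpinski} with its two-index upgrade Lemma~\ref{sierpinski2}, which is precisely the tool built to control an admissible family indexed by a product of countable sets. First I would apply Lemma~\ref{techlem} with $\mathcal{A}$ consisting of subsurfaces $A_{\Lambda} = \bigcup_{(n,m)\in\Lambda} M_{n,2m}$ for infinite subsets $\Lambda \subset \Z \times \Z$, exactly mirroring the first step of Lemma~\ref{cofinitelem}. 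The admissibility hypothesis of the present lemma guarantees that these unions give well-defined homeomorphisms, hypothesis~(1) of Lemma~\ref{techlem} is checked by partitioning $\Z \times \Z$ into countably many infinite pieces, and hypothesis~(2) follows from Lemma~\ref{uniformperfect} (commutator length $2$) applied to the disjoint union of sliced Loch Ness monsters. This yields some $A_{\Lambda} \in \mathcal{A}$ with $\MCG(A_{\Lambda}) \subset W^{16}$.

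\textbf{The key modification.}
The heart of the difference is the displacement step. In Lemma~\ref{cofinitelem} the maps $f_\alpha$ shift standard pieces \emph{within a single copy} of a sliced Loch Ness monster using the linear order on $\Z$. Here, because the pieces are indexed by $(n,m)$ with $n$ labelling \emph{which} copy $S_n$ and $m$ labelling the standard piece within $S_n$, a naive shift cannot move pieces between distinct copies $S_n$ and $S_{n'}$, since these are disjoint subsurfaces. This is where clause~(ii) of Lemma~\ref{sierpinski2} is essential: choosing $\{\Omega_\alpha\}$ so that each $\Omega_\alpha \cap (\{n\} \times \Z)$ is infinite guarantees that every copy $S_n$ contains infinitely many indices from $\Omega_\alpha$. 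Thus I can define $f_\alpha$ to act separately on each copy $S_n$, shifting the pieces $M_{n,2m}$ with $(n,m)\in\Omega_\alpha$ into the complementary (odd-indexed) pieces of that \emph{same} copy, exactly as in the one-index proof restricted to each $\{n\}\times\Z$. Crucially $f_\alpha$ is then a well-defined homeomorphism supported in $\bigcup_n S_n$ (using admissibility again to assemble the per-copy homeomorphisms), and it exists by the change of coordinates principle on each copy.

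\textbf{Finishing.}
From here the argument is verbatim that of Lemma~\ref{cofinitelem}. Since $\{\Omega_\alpha\}$ is uncountable and $W$ is countably syndetic, two of the $f_\alpha, f_\beta$ lie in a common left translate $gW$, so $f_\alpha^{-1}f_\beta, f_\beta^{-1}f_\alpha \in W^2$. Clause~(i) of Lemma~\ref{sierpinski2}, that $\Omega_\alpha \cap \Omega_\beta$ is finite, then forces $f_\beta^{-1}f_\alpha$ to carry all but finitely many $M_{n,2m}$ into $A_{\Omega_\beta} \subset A_\Lambda$, and symmetrically for $f_\alpha^{-1}f_\beta$. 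Collecting the finite exceptional set $F'$, splitting the total product into the three pieces $X_1, X_2, X_3$ as before, and conjugating, one gets $\MCG(X_1), \MCG(X_2) \subset W^{20}$, hence $\MCG(X_1 \cup X_2) \subset W^{40}$, and repeating for the odd-indexed pieces $M_{n,2m+1}$ doubles this to $W^{80}$, absorbing $F'$ into the final finite set $F$. The main obstacle I anticipate is purely the bookkeeping of the displacement map across infinitely many disjoint copies: one must be careful that the per-copy shifts glue to a genuine element of $\PMCGcc{S}$ and that the finite-intersection conclusion survives the product indexing --- but both are handled exactly by the two defining properties of Lemma~\ref{sierpinski2}, which is why that lemma was isolated. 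No step beyond this requires new ideas over Lemma~\ref{cofinitelem}.
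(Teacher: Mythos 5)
Your strategy is exactly the paper's: rerun the proof of Lemma~\ref{cofinitelem} with Lemma~\ref{sierpinski2} in place of Lemma~\ref{sierpinski}, and use clause~(ii) of Lemma~\ref{sierpinski2} to make the displacement maps act copy by copy. That is the intended key point, and your finishing steps (common translate, finite intersections, the $X_1,X_2$ conjugation into $W^{40}$, repeat for odd pieces) match the paper verbatim.

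There is, however, one step that fails as written: you feed Lemma~\ref{techlem} the family $\mathcal{A}$ of all $A_\Lambda=\bigcup_{(n,m)\in\Lambda}M_{n,2m}$ for \emph{arbitrary} infinite $\Lambda\subset\Z\times\Z$. Lemma~\ref{techlem} only returns \emph{some} member of $\mathcal{A}$ with $\MCG(A_\Lambda)\subset W^{16}$, and with your $\mathcal{A}$ that $\Lambda$ could be contained in a single fibre $\{n_0\}\times\Z$, or meet all but finitely many fibres finitely. Since the displacement maps $f_\alpha$ necessarily preserve each copy $S_n$ (there is in general no homeomorphism of $S$ carrying pieces of $S_n$ into $S_{n'}$), the pieces $M_{n,2m}$ with $n\neq n_0$ could then never be conjugated into $\MCG(A_\Lambda)$; equivalently, you could not choose $\Omega_\alpha\subset\Lambda$ satisfying clause~(ii), which your own argument requires. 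The fix is what the paper does: take $\mathcal{A}$ to consist only of product-type sets $A_\Lambda=\bigcup_{n\in\N,\,m\in\Lambda}M_{n,2m}$ indexed by infinite $\Lambda\subset\Z$, so that every member automatically meets every copy $S_n$ in infinitely many pieces; hypothesis~(1) of Lemma~\ref{techlem} is still satisfied by partitioning $\Z$ into infinitely many infinite sets, and Lemma~\ref{sierpinski2} is then applied to $\N\times\Lambda$. With that single change your proof is the paper's. (You also leave the case of a finite family $\{S_n\}$ unaddressed; the paper dismisses it as similar, so this is minor.)
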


\begin{proof}
We show the case where $\{S_n\}_{n \in \N}$ is infinite since the finite case is similar. Apply Lemma \ref{techlem} with $\mathcal{A}$ consisting of subsurfaces of the form $A_{\Lambda} = \bigcup_{n \in \N, m \in \Lambda}  M_{n,2m}$ to show $\MCG(A_{\Lambda}) \subset W^{16}$ for some infinite $\Lambda \subset \Z$. Apply Lemma \ref{sierpinski2} to $\N \times \Lambda$, so that $\{\Omega_\alpha\}_{\alpha \in \R}$ are infinite subsets of $\N \times \Lambda$ with the properties listed in the lemma. Note we may assume $\N \times \Lambda$ and each $\Omega_\alpha$ contain infinitely many positive and negative integers in the second coordinate.

Modify all of the standard pieces slightly as before, then for each $\alpha$, let $f_\alpha$ be a map supported in $\bigcup_{i \in \N} S_i$ with the following property. For all $(n,m) \in \Omega_\alpha$, let $f_\alpha(M_{n,2m})$ be the smallest connected subsurface containing the union of $M_{n,2m+1}, M_{n,2m+2}$,..., $M_{n,2k-1}$ where $k$ is the smallest second component among the elements of $\Omega_\alpha \cap (\{n\} \times \Lambda)$ larger than $m$. Now the proof is completed as before.
\end{proof}

\subsection{Fragmentation} \label{fragmentation}

To work with the subgroup $\overline{\PMCG_c(S)}$, we will use results of the author \cite{Dickmann_2023} for decomposing  an element of $\overline{\PMCG_c(S)}$ into simpler pieces. 

\begin{lemma} \label{fraglemma}
    Let $S$ be any infinite-type surface and $f \in \overline{\PMCG_c(S)}$. There exist two sequences of compact subsurfaces $\{K_{i}\}$ and $\{C_{i}\}$, with each sequence consisting of  disjoint surfaces, and $g,h \in \overline{\PMCGc(S)}$ such that 
    \begin{enumerate}[(i)]
        \item
            $\supp(g) \subseteq \bigcup_{i} C_{i}$ and $\supp(h) \subseteq \bigcup_{i} K_{i}$,
        \item
            $f=hg$.
    \end{enumerate}

Furthermore, if $S$ is a disk with handles we can assume the following: 

\begin{enumerate} [(i)]
\item Each $\partial K_i$ and $\partial C_i$ is a single essential simple closed curve.

\item $S \searrow \cup_i K_i$ and $S \searrow \cup_i C_i$ are homeomorphic to $S$ with compact boundary components added accumulating to some subset of the ends.
\end{enumerate} 

\end{lemma}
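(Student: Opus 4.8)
The plan is to avoid any telescoping infinite product (whose consecutive factors fail to commute, which obstructs a clean split into two internally disjoint families) and instead to realize $f$ as literally respecting a decomposition of $S$ into compact \emph{blocks}. Concretely, I would first produce a locally finite, disjoint system $\mathcal{C}$ of essential separating simple closed curves (and, when $\partial S \neq \emptyset$, essential arcs) that exhausts $S$, cuts it into compact pieces $\{B_j\}$, and is preserved by $f$ up to isotopy. Granting this, I would isotope $f$ so that it fixes every curve and arc of $\mathcal{C}$ pointwise. Because $f$ fixes each end and fixes $\partial B_j$ pointwise, and is orientation-preserving, it cannot carry a complementary block to a neighboring one, so $f$ restricts to a homeomorphism of each $B_j$ supported in that compact block; thus $f = \prod_j f|_{B_j}$, a locally finite product of compactly supported maps with disjoint interiors.

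The decomposition into two families then comes from a two-coloring. The dual adjacency graph of the blocks $\{B_j\}$ (vertices the blocks, an edge for each shared curve or arc of $\mathcal{C}$) is a forest, since cutting along a separating curve or arc contributes one edge and creates no cycle; hence it is bipartite. Color the blocks black and white so that same-colored blocks meet, if at all, only along elements of $\mathcal{C}$. Taking $\{C_i\}$ to be the black blocks and $\{K_i\}$ the white blocks makes each family pairwise disjoint. Define $g$ to agree with $f$ on the black blocks and to be the identity elsewhere, and $h$ to agree with $f$ on the white blocks and the identity elsewhere. Both are well-defined homeomorphisms: along the shared elements of $\mathcal{C}$ each of $g,h$ equals the identity, matching $f$ there, so continuity holds. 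Since black blocks are mutually disjoint, $g$ is an infinite product of compactly supported maps with locally finite support, hence lies in $\overline{\PMCG_c(S)}$, and likewise for $h$. Checking $f = hg$ (indeed $f = gh$) is then immediate block by block, so $\supp(g) \subseteq \bigcup_i C_i$, $\supp(h) \subseteq \bigcup_i K_i$, and $f = hg$, giving conditions (i) and (ii).

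The hard part will be the first step: producing an $f$-invariant exhausting curve/arc system. This is exactly the sort of fragmentation input recorded in the author's earlier work \cite{dickmann2021mapping}; the idea is that a compactly supported approximant $\phi$ of $f$ is the identity off a compact set, so on large compacta $f$ agrees up to isotopy with an \emph{eventually trivial} map, and from a nested sequence of such approximants one extracts a subsequence of boundary curves whose isotopy classes $f$ preserves and which exit every end and every boundary chain. Care is needed for surfaces with complicated end spaces (e.g.\ large countable ordinals) and with noncompact boundary: the system must reach every end accumulated by genus, every end accumulated by compact boundary, and every boundary chain, and it must remain locally finite so that the blocks are compact. I would assemble it end by end along an exhaustion and then discard redundant curves to preserve disjointness.

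Finally, for the ``furthermore'' when $S$ is a disk with handles, I would use Lemma \ref{fulldecomp} to cut $S$ along disjoint essential arcs into sliced Loch Ness monster pieces and then refine by separating curves so that the blocks are compact and each is bounded by a \emph{single} essential simple closed curve (an interior arc closed up along a boundary segment, or a genuine separating curve cutting off a compact handle-block). The blocks are then arranged treelike, so the two-coloring applies verbatim, and cutting along one color class removes a disjoint collection of compact blocks; the complement is again a disk with handles with one compact boundary component added per removed block, these accumulating to the subset of ends toward which $\mathcal{C}$ exits. This yields both refined conclusions, so that $\partial K_i$ and $\partial C_i$ are single essential simple closed curves and $S \searrow \bigcup_i K_i$ and $S \searrow \bigcup_i C_i$ are homeomorphic to $S$ with compact boundary components added.
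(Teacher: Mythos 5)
The central step of your argument --- producing a locally finite, exhausting system $\mathcal{C}$ of disjoint separating curves and arcs, with compact complementary blocks, that is preserved by $f$ up to isotopy --- is not merely ``the hard part''; it is false for a general $f \in \overline{\PMCG_c(S)}$, and the rest of the proof collapses without it. Realize the Loch Ness monster as a half-plane with handles $H_1,H_2,\dots$ attached along the positive integers, let $c_n$ be a chain curve passing through $H_n$ and $H_{n+1}$, and let $f=\prod_n T_{c_n}^{3}$; this is a well-defined element of $\overline{\PMCG_c(S)}$ because the $c_n$ are disjoint, locally finite, and exit the end. Any locally finite curve system with compact complementary blocks must contain a curve $\gamma$ with $i(\gamma,c_n)>0$ for some $n$ (the $c_n$ cannot all be isotoped off a system that exhausts the surface by compact pieces), and by the standard inequality for multitwists along disjoint curves, $i(f(\gamma),\gamma)\geq \sum_n i(c_n,\gamma)^{2}>0$. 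Hence $f(\gamma)$ is isotopic neither to $\gamma$ nor to any other member of $\mathcal{C}$, all of which are disjoint from $\gamma$, so $f(\mathcal{C})\neq\mathcal{C}$. More structurally: if your step could be carried out, $f$ would factor as a product of commuting mapping classes supported in pairwise disjoint compact blocks, a strictly smaller class than $\overline{\PMCG_c(S)}$. This is precisely why the lemma is stated with two families $\{K_i\}$ and $\{C_i\}$ that are each internally disjoint but are \emph{not} required to be disjoint from one another; your two-coloring of a genuine block decomposition forces the two families to be disjoint, which is too strong.

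For comparison: the present paper does not prove this lemma but imports it from \cite{dickmann2021mapping}, where the argument is the standard fragmentation one (as in Mann's work on homeomorphism groups and Aramayona--Patel--Vlamis for mapping class groups). One fixes a principal exhaustion $L_1\subset L_2\subset\cdots$, uses compactly supported approximants $\phi_i$ of $f$ agreeing with $f$ on $L_i$ to write $f$ as a convergent telescoping product of mapping classes supported in the overlapping regions $\overline{L_{i+1}\setminus L_{i-1}}$, and then groups the even-indexed and odd-indexed factors into $h$ and $g$. The resulting support families interleave and overlap each other region by region; no invariance of a curve system under $f$ is needed, only the convergence of the approximants. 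Your ``furthermore'' paragraph inherits the same gap, since it again presumes that $f$ respects the cut system coming from Lemma \ref{fulldecomp}.
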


We use $S \searrow K$ or $S_K$ to denote the surface obtained from cutting $S$ along $K$. Similar to the definition of cutting along a curve or arc, $S_K$ is a surface with boundary with an identification map from some subset of $\partial S_K$ to some subset of $\partial K$ such that the quotient on $S_K \sqcup K$ is homeomorphic to $S$. Note we can realize $S_K$ as a subsurface of $S$, in particular the closure of the complement of $K$.

\section{Proof of Main Results} \label{mainsection}

Now we are ready to prove the results from the introduction. First, we prove the reverse directions of the main theorems as well as Theorem \ref{thm1.3} using the tools from the previous sections. Then we prove the forward directions using the work Domat \cite{domat2020big} and Lemma \ref{finitechain}. We prove Corollary \ref{cor1.1} and Theorem \ref{thm1.2} afterward.  

\subsection{Reverse Directions of Theorems \hyperref[thma]{A} and \hyperref[thmb]{B}} \label{reverse}

First, we will prove the reverse direction of Theorem \hyperref[thmb]{B}, and then we will prove the reverse direction of Theorem \hyperref[thma]{A} and Theorem \ref{thm1.3} with a similar method. Recall the Steinhaus property implies automatic continuity by Proposition \ref{rosendal}. Let $\operatorname{Stab}(K)$ denote the subgroup consisting of maps that pointwise fix a subsurface $K$. When $K$ is finite-type, $\operatorname{Stab}(K)$ is an open neighborhood of the identity, and the collection of all such stabilizers is a neighborhood basis of the identity.

\begin{proposition} \label{reverse_b}
Let $S$ be a connected sum of a finite-type surface with finitely many disks with handles. Then $\PMCGcc{S}$ is Steinhaus with constant 328.
\end{proposition}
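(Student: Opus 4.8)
The plan is to verify the Steinhaus property directly from its definition: given a countably syndetic symmetric set $W \subset \PMCGcc{S}$, I must produce an open neighborhood of the identity contained in $W^{328}$. The natural candidate for that neighborhood is a stabilizer $\operatorname{Stab}(K)$ for a suitable finite-type subsurface $K$, since these form a neighborhood basis of the identity. So the goal reduces to showing that $\operatorname{Stab}(K) \subset W^{328}$ for some finite-type $K$, after which Proposition \ref{rosendal} finishes the argument.

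First I would set up the decomposition of $S$. By Lemma \ref{fulldecomp}, each of the finitely many disks-with-handles summands can be cut along essential arcs into sliced Loch Ness monsters, and I can realize a copy of a sliced Loch Ness monster $M$ (with its standard pieces $\{M_n\}$) inside $S$. Applying Lemma \ref{cofinitelem} to each such $M$ gives a finite set $F$ with $\prod_{n \notin F} \MCG(M_n) \subset W^{80}$; more generally Lemma \ref{cofinitelem2} handles a countable admissible family of such monsters simultaneously, which is what I expect to need since the connected sum contains finitely many disks-with-handles pieces that together decompose into countably many standard pieces. This pins down a cofinite product of mapping class groups of standard pieces inside a controlled power of $W$.

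Next I would handle an arbitrary element $f \in \operatorname{Stab}(K)$, where $K$ is chosen large enough to contain the finite-type summand together with all the finitely many exceptional pieces coming from the $F$'s and the boundary regions where the arcs/curves were cut. The key technical device is the fragmentation Lemma \ref{fraglemma}: writing $f = hg$ with $\supp(g) \subseteq \bigcup_i C_i$ and $\supp(h) \subseteq \bigcup_i K_i$ along two disjoint families of compact pieces. Each of the two halves has support spread across disjoint compact subsurfaces that, away from finitely many, sit inside standard pieces already shown to lie in a power of $W$; combining this with the uniform perfectness from Lemma \ref{uniformperfect} (every element of $\PMCGcc{\cdot}$ of a disk with handles is a product of two commutators) lets me express $g$ and $h$ each as a bounded product of elements drawn from the controlled pieces. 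Tracking the word length through Lemma \ref{cofinitelem2} (cost $80$), the two fragmentation halves, the commutator expansions, and the conjugations used to move exceptional pieces into the good region is where the arithmetic must be assembled so that the total stays at $328$.

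\textbf{The main obstacle} I anticipate is bookkeeping the boundary and the exceptional pieces rather than any single hard idea: because $S$ is only a connected sum of a finite-type surface with finitely many disks with handles (not itself a single disk with handles), I must be careful that the arcs from Lemma \ref{fulldecomp} and the separating curves interact cleanly with the finite-type summand, that the family of standard pieces across all the monster summands is genuinely admissible so that the infinite products in Lemma \ref{techlem}/\ref{cofinitelem2} are well-defined, and that Lemma \ref{uniformperfect} (stated for a single disk with handles) is applied only to the disk-with-handles pieces. The finite-type summand and the finitely many exceptional standard pieces all get absorbed into the single finite-type subsurface $K$, so $\operatorname{Stab}(K)$ sees none of them; the whole quantitative burden is then in confirming that the remaining, genuinely self-similar part is covered by $W^{328}$, and in checking that the constant $328$ is an honest upper bound for the accumulated power.
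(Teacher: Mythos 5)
Your skeleton matches the paper's: take $W$ countably syndetic symmetric, aim to show $\operatorname{Stab}(K)\subset W^{328}$ for a suitable $K$, cut $S\searrow K$ into finitely many disks with handles and then (via Lemma \ref{fulldecomp}) into sliced Loch Ness monsters, invoke Lemma \ref{cofinitelem2} to place a cofinite product of standard-piece mapping class groups inside $W^{80}$, and fragment $f=hg$ by Lemma \ref{fraglemma}. But there is a genuine gap in how you dispose of the exceptional set $F$ and the leftover compactly supported part, and this is exactly where the constant $328$ comes from. You propose to choose $K$ ``large enough to contain \ldots{} the finitely many exceptional pieces coming from the $F$'s.'' This has an order-of-quantifiers problem: for the argument to work, the standard pieces $\{M_{n,m}\}$ must be arranged (by change of coordinates) so that each fragmentation piece $K_i$ of $g$ lies in a single standard piece --- otherwise $g$ is not an element of the product $\prod\MCG(M_{n,m})$ and Lemma \ref{cofinitelem2} says nothing about it. That arrangement, and hence the finite set $F$ produced by Lemma \ref{cofinitelem2}, depends on the particular $f\in\operatorname{Stab}(K)$ being treated, so $F$ cannot be absorbed into $K$ in advance. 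Conversely, if you fix the standard pieces before choosing $K$, the $K_i$ will not align with them.

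The paper instead keeps $F$ outside $K$ and handles the leftover as follows: writing $T=\bigcup_{(n,m)\notin F}M_{n,m}$, one corrects $g$ by some $g'\in\MCG(T)\subset W^{80}$ so that $g'g$ is compactly supported, and then conjugates $g'g$ into $\MCG(T)$ by an element $\phi\in W^2$. The existence of such a $\phi$ is the linchpin you omit: it comes from Proposition \ref{bct} (Baire category), which supplies a neighborhood $U$ of the identity in which $W^2$ is \emph{dense}, and $K$ must be chosen at the outset so that $\operatorname{Stab}(K)\subseteq U$; density then yields $\phi\in W^2$ with $\phi(K')\subset T$ for the compact support $K'$ of $g'g$. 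Without sourcing the conjugator from $W^2$, ``the conjugations used to move exceptional pieces into the good region'' do not stay in any power of $W$. This mechanism produces the arithmetic $g'g\in W^{2+80+2}=W^{84}$, $g\in W^{80+84}=W^{164}$, $f=hg\in W^{328}$; your mention of the commutator expansions from Lemma \ref{uniformperfect} is a red herring at this level, since uniform perfectness is consumed inside the proof of Lemma \ref{techlem} and does not reappear in the count.
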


 \begin{proof}

    First, we explain the details for the sliced Loch Ness monster and then discuss how to extend the argument to the other cases.
 
\p{Case 1: sliced Loch Ness monsters} Suppose $S$ is any sliced Loch Ness monster. Let $W$ be any countably syndetic symmetric subset of $\MCG(S)$. By Proposition \ref{bct}, let $U$ be an open neighborhood of the identity such that $W^2$ is dense in $U$, and find some compact subsurface $K$ such that $\operatorname{Stab}(K)\subseteq U$. Note that any sliced Loch Ness monster $S$ has a compact exhaustion $\{K_i\}$ where each $S \searrow K_i$ is homeomorphic to $S$ (see Figure \ref{SLNM}). Thus we can assume that $S_K = S \searrow K$ is homeomorphic to $S$ by replacing $K$ with a large enough $K_i$ if needed. Now we want to show that $\MCG(S_K) = \operatorname{Stab}(K) \subseteq W^k$ for some $k$.

Let $f \in \MCG(S_K)$ be any element. Let $g$ be one of the maps produced by applying Lemma \ref{fraglemma} to $f$, and assume the conditions of the second part of Lemma \ref{fraglemma} hold for $g$. Note here we are using Lemma \ref{structure} which implies $\MCG(S_K) = \PMCGcc{S_K}$. Let $\{K_i\}$ be the sequence of compact subsurfaces containing the support of $g$. Let $\{M_i\}$ be the collection of standard pieces for $S_K$. We can assume by applying change of coordinates that each $M_i$ contains exactly one of the $K_i$, and each $K_i$ appears in some $M_i$. By Lemma \ref{cofinitelem}, we have some cofinite union $T = \bigcup_{i \in \Z \setminus F} M_i$ with $\MCG(T) \subset W^{80}$. Therefore, we can find some $g^\prime \in \MCG(T)$ such that $g^\prime g \in \PMCG_c(S_K)$. Now let $K^\prime \subset S_K$ be a compact subsurface bounded by a single curve that contains the support of $g^\prime g$. Now using the density of $W^2$ in $\MCG(S_K)$, find some element $\phi \in W^2$ such that $\phi(K^\prime) \subset T$. 
It follows that $\phi g^\prime g \phi^{-1} \in W^{80}$, and therefore $g^\prime g \in W^{84}$. Finally, this gives $g \in W^{164}$, and since the above argument also applies to the other element from fragmentation, $f \in W^{328}$.  

\p{Case 2: disks with handles}
Now assume $S$ is a disk with handles, and let $W$ be a countably syndetic symmetric subset of $\overline{\PMCG_c(S)}$. Let $U$ be an open neighborhood of the identity such that $W^2$ is dense in $U$, and find some compact subsurface $K$ such that $\operatorname{Stab}(K)\subseteq U$. We now claim we can enlarge $K$ so that each component of $S_K = S \searrow K$ has exactly one boundary chain and no compact boundary components. First note that there exists a compact exhaustion $\{K_i\}$ of $S$ such that each $\partial K_i$ is a single component that intersects $\partial S$ in a union of closed intervals, and the components of $S \searrow K_i$ are infinite-type without compact boundary components. To build such an exhaustion, start with the disk with boundary points removed used to construct $S$, call it $D$, and find a compact exhaustion $\{C_i\}$ of $D$ such that each $C_i$ is a disk and each $D \searrow C_i$ is a union of disks with boundary points removed. Then we get the desired exhaustion on $S$ by attaching handles to $D$ and modifying the $C_i$ accordingly. During this last step, we must require the attaching regions for any handle to be disjoint from each $\partial C_i$ and that whenever one attaching region of some handle is contained in the interior of $C_i$, then then other attaching region is also contained in the interior. Now note each $S \searrow K_i$ has a single boundary chain since $S$ has one boundary chain and all of the boundary components of $S \searrow K_i$ point to boundary ends of $S$. The claim follows by replacing $K$ with a large enough $K_i$ if needed. Now since $S_K$ is also infinite-type and has no interior ends, Lemma \ref{finitechain} implies $S_K$ is a disjoint union of disks with handles. The complement of any compact subsurface necessarily has a finite number of components, so $S_K$ is a finite disjoint union.

Let $f \in \MCG(S_K)$ be any element. Let $g$ be one of the maps produced by applying fragmentation to $f$, and assume the conditions of the second part of Lemma \ref{fraglemma} hold for $g$ when restricted to each component of $S_K$. In this case, we need to apply Lemma \ref{fraglemma} to each component separately and then combine. Let $\{K_i\}$ be the collection of compact subsurfaces containing the support of $g$. Using Lemma \ref{fulldecomp}, we can cut $S_K$ along a collection of disjoint arcs into sliced Loch Ness monsters $\{S_n\}$. We can also assume these arcs are chosen to miss the $K_i$. 

Let $\{M_{n,m}\}$ be the collection of standard pieces for $S_n$. By change of coordinates, we can assume each $M_{n,m}$ contains exactly one $K_i$, and each $K_i$ appears in some $M_{n,m}$. By Lemma \ref{cofinitelem2}, there is a finite set $F$ such that $T = \bigcup_{(n,m) \notin F} M_{n,m}$ and $\MCG(T) \subset W^{80}$. Proceed as before.

\p{Case 3: connected sums}
We now need to consider the general case when $S$ is a connected sum of a finite-type surface with finitely many disks with handles. Let $W$ be a countably syndetic symmetric subset of $\overline{\PMCG_c(S)}$. Let $U$ be an open neighborhood of the identity such that $W^2$ is dense in $U$, and find some finite-type subsurface $K$ such that $\operatorname{Stab}(K)\subseteq U$.  By choosing $K$ large enough to contain all the punctures and compact boundary components, we can ensure each component of $S \searrow K$ has one boundary chain, no interior ends, and no compact boundary components. By applying Lemma \ref{finitechain}, we have that $S \searrow K$ is homeomorphic to a disjoint union of finitely many disks with handles.  Proceed as in the disk with handles case.
\end{proof}

Now we show the reverse direction of Theorem \hyperref[thma]{A} using similar techniques. This will also prove Theorem \ref{thm1.3}, the extension to the full mapping class group.

\begin{proposition} \label{reverse_a}
Let $S$ be a surface satisfying the conditions of Proposition \ref{reverse_b} and with finitely many ends accumulated by genus. Then $\PMCG(S)$ and $\MCG(S)$ are Steinhaus with constant 328.
\end{proposition}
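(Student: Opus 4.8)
The plan is to upgrade the argument of Proposition \ref{reverse_b} to account for the extra $\Z^{n-1}$ factor coming from Lemma \ref{structure}. By that lemma, the hypothesis of finitely many ends accumulated by genus means $\PMCG(S) = \overline{\PMCG_c(S)} \rtimes H$ with $H \cong \Z^{n-1}$ finitely generated. The key observation is that Proposition \ref{reverse_b} already shows $\overline{\PMCG_c(S)}$ is Steinhaus with constant $328$, so the remaining task is to absorb the finitely generated abelian factor into the argument at no worse cost, and then to handle the non-pure mapping classes for the $\MCG(S)$ statement.

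First I would let $W$ be a countably syndetic symmetric subset of $\PMCG(S)$ and apply Lemma \ref{bct} to find an open neighborhood $U$ of the identity with $W^2$ dense in $U$, and a finite-type $K$ with $\operatorname{Stab}(K) \subseteq U$. The intersection $W \cap \overline{\PMCG_c(S)}$ is again countably syndetic in $\overline{\PMCG_c(S)}$ (its left translates by a countable set covering $\PMCG(S)$ restrict to a covering of the subgroup), so the entire machinery of Proposition \ref{reverse_b} applies internally to give $\operatorname{Stab}(K) \cap \overline{\PMCG_c(S)} \subseteq W^{328}$. This shows $W^{328}$ contains an open neighborhood of the identity \emph{within} the closed subgroup $\overline{\PMCG_c(S)}$. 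The point now is that $\operatorname{Stab}(K)$ itself, as an open neighborhood of the identity in $\PMCG(S)$, can be chosen to lie inside $\overline{\PMCG_c(S)}$: since $H \cong \Z^{n-1}$ is realized by handle shifts supported near the infinitely-many-genus ends, enlarging $K$ to contain representatives of a generating region forces any element fixing $K$ pointwise to be trivial in the $H$-coordinate, hence to lie in $\overline{\PMCG_c(S)}$. Thus $\operatorname{Stab}(K) \subseteq W^{328}$ as a neighborhood in the full group, giving the Steinhaus property with the same constant $328$.

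The step I expect to be the main obstacle is verifying that the finitely generated factor $H$ can genuinely be trapped inside a finite-type stabilizer — i.e., that the handle shifts generating $H \cong \Z^{n-1}$ move a fixed compact region, so that fixing a large enough finite-type $K$ pointwise kills the $H$-component. This is exactly where the finiteness of the number $n$ of ends accumulated by genus is used: with finitely many such ends the handle shifts are supported in finitely many ``channels,'' each crossing any sufficiently large $K$, whereas with infinitely many such ends one has the $\Z^\infty$ obstruction of the introduction and no single compact region catches every generator. I would make this precise using the explicit description of $H$ from \cite{APV2017, dickmann2021mapping} referenced in Lemma \ref{structure}.

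For the statement about the full group $\MCG(S)$, the final step is to pass from $\PMCG(S)$ to $\MCG(S)$. Under the hypotheses of Proposition \ref{reverse_b} together with finitely many ends accumulated by genus, the end space is finite, so the natural map $\MCG(S) \to \operatorname{Sym}(\operatorname{Ends}(S))$ lands in a finite symmetric group and $\PMCG(S)$ is a finite-index (open, normal) subgroup of $\MCG(S)$. Given a countably syndetic symmetric $W \subseteq \MCG(S)$, its intersection with the finite-index open subgroup $\PMCG(S)$ remains countably syndetic there, so by the pure case $W^{328}$ contains an open neighborhood of the identity in $\PMCG(S)$, which is also open in $\MCG(S)$. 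Hence $\MCG(S)$ is Steinhaus with the same constant $328$, and automatic continuity follows from Proposition \ref{rosendal}.
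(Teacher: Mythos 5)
Your core idea is the same as the paper's: enlarge the finite-type subsurface $K$ so that $\operatorname{Stab}(K)$ lands inside $\PMCGcc{S}$, and then reuse the machinery of Proposition \ref{reverse_b}. The paper justifies this containment slightly more cleanly than your handle-shift argument --- it chooses $K$ so that every component of $S \searrow K$ is a sliced Loch Ness monster, hence one-ended, so Lemma \ref{structure} applied to each component kills the $H$-coordinate --- but your reasoning about why finitely many ends accumulated by genus is exactly what makes this possible is correct and is the right place to locate the difficulty.

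There is, however, a genuine gap in how you transport the syndetic set into the subgroup, and it breaks the stated constant. You claim that $W \cap \overline{\PMCG_c(S)}$ is countably syndetic in $\overline{\PMCG_c(S)}$ because the translates $g_i W$ covering $\PMCG(S)$ restrict to a covering of the subgroup. The sets $g_i W \cap \overline{\PMCG_c(S)}$ do cover the subgroup, but they are not left translates of $W \cap \overline{\PMCG_c(S)}$ by elements of the subgroup: if $g_i w_i$ and $g_i w$ both lie in $\overline{\PMCG_c(S)}$, then $(g_i w_i)^{-1}(g_i w) = w_i^{-1} w$ lies in $W^2 \cap \overline{\PMCG_c(S)}$, not in $W \cap \overline{\PMCG_c(S)}$. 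The standard conclusion is therefore only that $W^2 \cap \overline{\PMCG_c(S)}$ is countably syndetic in the subgroup, so invoking Proposition \ref{reverse_b} as a black box yields $\operatorname{Stab}(K) \subseteq (W^2)^{328} = W^{656}$, and the same issue compounds again in your reduction of $\MCG(S)$ to $\PMCG(S)$. This still proves automatic continuity, but not the proposition as stated (Steinhaus with constant $328$). The paper sidesteps this entirely: it runs the whole fragmentation argument of Proposition \ref{reverse_b} directly with $W$ a countably syndetic symmetric subset of $\PMCG(S)$ (or of $\MCG(S)$), noting that the only change needed is a minor adaptation of Lemma \ref{techlem} in which $W$ is countably syndetic in the larger group rather than in $\PMCGcc{S}$. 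To repair your write-up at no cost to the constant, you should do the same rather than intersect $W$ with the subgroup.
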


 \begin{proof} Let $W$ be a countably syndetic symmetric subset of $\PMCG(S)$. Let $U$ be an open neighborhood of the identity such that $W^2$ is dense in $U$, and find some compact subsurface $K$ such that $\operatorname{Stab}(K)\subseteq U$. As before we can enlarge $K$ if needed so that $S \searrow K$ is homeomorphic to a disjoint union of finitely many disks with handles. Since there are finitely many ends accumulated by genus, we can further assume that each component of $S \searrow K$ is a sliced Loch Ness monster. This ensures that $\operatorname{Stab}(K) \subset \PMCGcc{S}$ so we can use fragmentation as in the proof of Proposition \ref{reverse_b} to show $\operatorname{Stab}(K) 
 \subset W^{328}$. The proof for $\MCG(S)$ is identical. Note that we must also use a minor adaptation of Lemma \ref{techlem} where $W$ is a countably syndetic symmetric subset of $\PMCG(S)$ or $\MCG(S)$ instead of $\PMCGcc{S}$.
 \end{proof}

\subsection{Forward Directions of Theorems \hyperref[thma]{A} and \hyperref[thmb]{B}} \label{forward}

Now we finish the proofs of Theorems \hyperref[thma]{A} and \hyperref[thmb]{B} using the work of Domat \cite{domat2020big}. A \textit{nondisplaceable} surface in $S$ will refer to a subsurface $K$ disjoint from the noncompact boundary components of $S$ such that $f(K) \cap K \ne \emptyset$ for all representatives of $f \in \PMCGcc{S}$. Note a subsurface $K$ is nondisplaceable if it separates ends; i.e., if $S \searrow K$ is disconnected and induces a partition of $E(S)$ into two or more sets. A subsurface is also nondisplaceable if it separates the ends of the interior surface, so a subsurface that separates boundary components or separates a boundary component from an end is also nondisplaceable. The following result is implicit from Sections 6,7,8, and 10 of Domat's paper.

\begin{lemma} [Domat] \label{domat}
 Let $S$ be an infinite-type surface such that either
 
 \begin{enumerate} [(i)]
     \item $S$ has genus at least 3 and there exists an infinite sequence of disjoint nondisplaceable essential annuli that eventually leaves every compact subsurface.
     \item $S$ has any genus and there exists an infinite sequence of disjoint nondisplaceable essential spheres with n punctures and b boundary components for $n + b \geq 8$, and the sequence eventually leaves every compact subsurface.
 \end{enumerate}

Then there exists a discontinuous homomorphism $\phi: \PMCGcc{S} \rightarrow \Q$. 
\end{lemma}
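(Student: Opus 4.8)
The plan is to produce, for uncountably many choices of auxiliary data, a homomorphism $\PMCGcc{S} \to \Q$ and to arrange that these homomorphisms fail to be continuous. Since both hypotheses are designed to supply the same underlying object — an infinite, pairwise disjoint family of nondisplaceable subsurfaces $\{K_i\}$ that eventually leaves every compact subsurface — I would treat the two cases uniformly, the only difference being the local invariant attached to each $K_i$: in case (i) the annular (Dehn-twist) subsurface projection, and in case (ii) the projection to the curve graph of the punctured-and-bordered sphere $K_i$. The guiding principle is that nondisplaceability, $f(K_i) \cap K_i \neq \emptyset$ for all $f$, is exactly what makes subsurface projection to $K_i$ a well-defined coarse invariant of elements of $\PMCGcc{S}$.

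First I would organize the projection data. Because each $K_i$ is nondisplaceable, the full $G$-orbit $\{g K_i : g \in \PMCGcc{S},\, i \in \N\}$ is a collection of subsurfaces whose pairwise relative projections $\pi_{gK_i}(h K_j)$ are coarsely defined, and I would verify that it satisfies the Bestvina--Bromberg--Fujiwara projection axioms (including the Behrstock inequality and a bounded geodesic image estimate). This is where the numerical thresholds enter: genus at least $3$ (resp. $n + b \geq 8$) is what guarantees that the relevant curve graphs are connected, hyperbolic, of infinite diameter, and rich enough for the projection constants to be uniform. From this verified data I would form the BBF projection complex $\mathcal{P}$, a quasi-tree on which $\PMCGcc{S}$ acts by isometries, with each $K_i$ giving a vertex whose stabilizer contains the subgroup $\MCG(K_i)$ of classes supported in $K_i$.

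Next I would extract the numerical invariants. Each $\MCG(K_i)$ contains a preferred loxodromic isometry of $\mathcal{P}$ — a power of the Dehn twist $T_{K_i}$ in case (i), and a partial pseudo-Anosov supported in $K_i$ in case (ii) — so the Bestvina--Fujiwara machinery applied to the quasi-tree action yields a homogeneous quasimorphism $\phi_i$ on $\PMCGcc{S}$ that is nonzero on $\MCG(K_i)$ and essentially blind to the other $K_j$. For a choice of coefficient data $(c_i)$ I would then seek a homomorphism built from the $\phi_i$, with distinct choices of $(c_i)$ producing the uncountable family.

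The main obstacle — and the genuine content of Domat's Sections 6--10 — is the passage from quasimorphisms to honest $\Q$-valued \emph{homomorphisms} defined on all of the Polish group $\PMCGcc{S}$. One cannot simply define the map on the dense subgroup $\PMCG_c(S)$ of compactly supported classes (where each element meets only finitely many $K_i$, so additivity and $\Q$-valuedness are automatic) and extend by density, since any continuous extension would be continuous and hence fail to be discontinuous. Instead the combination must be defined directly on the closure and shown to be additive with vanishing defect; this is precisely where disjointness together with nondisplaceability is indispensable, as the contributions to distinct $\phi_i$ come from independent, non-interacting regions of $\mathcal{P}$, forcing the quasimorphism defect of the assembled map to telescope to zero, while the escape of $\{K_i\}$ from every compact set must be used to keep the resulting value in $\Q$. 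Granting this, discontinuity is the easy final step: given any compact subsurface $K$, some $K_i$ is disjoint from $K$, so a generator $g_i$ supported in $K_i$ lies in $\operatorname{Stab}(K)$ yet has image $c_i\,\phi_i(g_i) \neq 0$; hence $\ker \phi$ contains no $\operatorname{Stab}(K)$, is not open, and $\phi$ is discontinuous. Letting $(c_i)$ range over an uncountable separated family of coefficient sequences then yields uncountably many such homomorphisms.
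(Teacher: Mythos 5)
The paper does not actually prove this lemma: it is imported wholesale from Sections 6--10 of Domat's paper, with only the remark that the argument applies verbatim to the surfaces with noncompact boundary considered here. So your proposal has to be measured against Domat's argument, and while you correctly identify the engine (subsurface projections to the nondisplaceable pieces $K_i$, well defined precisely because of nondisplaceability, feeding into the Bestvina--Bromberg--Fujiwara projection complex and its quasimorphisms), the two steps you single out as the crux do not work as you describe. The passage from quasimorphisms to a homomorphism is not achieved by summing the $\phi_i$ with coefficients and arguing that ``the defect telescopes to zero'': the BBF quasimorphisms have genuinely positive defect, and no linear combination of quasimorphisms with nonvanishing defect is a homomorphism. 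Domat's route is indirect. The quasimorphisms serve only as certificates: for an infinite product $f=\prod_i f_i^{a_i}$ of twists (resp.\ partial pseudo-Anosovs) supported in the $K_i$ with rapidly growing exponents, if some power of $f$ could be approximated by a product of boundedly many commutators times a compactly supported element, then evaluating the quasimorphism attached to $K_N$ for large $N$ gives a contradiction --- the value grows like $a_N$ on one side but is bounded by the defect times the commutator length plus a bounded contribution from $\operatorname{Stab}(K_N)$ on the other. This shows $[f]$ has infinite order in $H_1(\overline{\PMCG_c(S)})$ modulo the image of $\PMCG_c(S)$, and the homomorphism to $\Q$ then comes from pure abelian group theory: send $[f]\mapsto 1$, kill the image of $\PMCG_c(S)$, and extend to all of $H_1$ using that $\Q$ is an injective $\Z$-module. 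Here the hypothesis that the $K_i$ leave every compact set is what makes the infinite product $f$ a well-defined element of $\PMCGcc{S}$ in the first place; it is not used to ``keep the value in $\Q$.''

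Your final discontinuity step is not merely a different argument --- it is false. You claim $\phi$ is nonzero on a single compactly supported generator $g_i$ supported in some $K_i$ disjoint from $K$. In case (i) this is impossible: such a $g_i$ lies in the mapping class group of a finite-type subsurface of genus at least $3$, which is perfect, so $g_i$ is a product of commutators inside $\PMCG_c(S)$ and every homomorphism to the abelian group $\Q$ must kill it. (In case (ii) compactly supported classes need not die for this reason, but the homomorphism is constructed so as to vanish on all of $\PMCG_c(S)$ regardless.) The correct witness to discontinuity is the infinite product $f$ itself: it is the limit of its finite partial products, each of which lies in $\ker\phi$, yet $\phi(f)=1$. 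Equivalently, $\phi$ is nontrivial but vanishes on the dense subgroup $\PMCG_c(S)$, so it cannot be continuous. This inversion matters for the rest of the paper: the whole tension between Lemma \ref{domat} and the automatic continuity results in Section \ref{reverse} is that obstructions to continuity here live at infinity, on limits of compactly supported classes, never on the compactly supported classes themselves.
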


Although Domat's work focused on surfaces with compact boundary, the conditions in Lemma \ref{domat} hold for some surfaces with noncompact boundary, and the proof goes through without adaptation. For surfaces with only compact boundary components, Domat showed the first condition holds when the interior of $S$ has at least two ends and at least one end accumulated by genus and the second condition holds when the interior has infinitely many ends. In the case of the Loch Ness monster, there are no finite-type nondisplaceable subsurfaces, so this case had to be handled separately. Domat and the author showed in the appendix of Domat's paper that $\MCG(S)$ does not have automatic continuity when $S$ is a Loch Ness monster.

\begin{proof}  [Proof of Theorem {\hyperref[thmb]{B}}]
Recall the reverse direction was shown in Proposition \ref{reverse_b}. When $S$ either has infinitely many interior planar ends, infinitely many compact boundary components, infinitely many boundary chains, or at least one interior end accumulated by genus we will show one of the conditions of Lemma $\ref{domat}$ is satisfied so that $\PMCGcc{S}$ does not have automatic continuity. After possibly filling in the finite number of punctures and capping the finite number of compact boundary components, we apply Lemma \ref{finitechain} to conclude $S$ is a connected sum of finitely many disks with handles. The original $S$ can then be obtained by connect summing with a finite-type surface with punctures and boundary components. 

\p{Case 1: infinitely many interior planar ends}  When $S$ has infinitely many planar interior ends, there is a closed neighborhood $U$ of one of these ends such that each component of $\partial U$ is compact, and $U$ has infinitely many planar ends. Now this case reduces to the cases originally considered by Domat, and the second condition of Lemma \ref{domat} holds.  
 
\p{Case 2: infinitely many compact boundary components}  When $S$ has infinitely many compact boundary components, there is some end accumulated by compact boundary, and every closed neighborhood of this end contains infinitely many compact boundary components. Let $\{U_i\}$ be a system of closed neighborhoods of this end such that $U_{i+1} \subset U_i$ for all $i$, and $\bigcap_{i=1}^\infty U_i = \emptyset$. Now the second condition of Lemma \ref{domat} holds by inductively choosing an essential punctured sphere in some sufficiently small $U_i$ that misses the previously chosen punctured spheres.

\p{Case 3: infinitely many boundary chains}  When $S$ has infinitely many boundary chains, there is some end such that every closed neighborhood has infinitely many boundary chains. Let $\{U_i\}$ be a system of closed neighborhoods of this end satisfying the two properties from the previous case. Since the interior of each $U_i$ has infinitely many ends, we can use an inductive procedure as before to show the second condition of Lemma \ref{domat} holds. 

\p{Case 4: at least one interior end accumulated by genus} When $S$ has an interior end accumulated by genus, there is a closed neighborhood $U$ of this end such that each component of $\partial U$ is compact, and $U$ has infinite genus. We can assume $S$ is not the Loch Ness monster since this case was ruled out by Domat and the author, so the interior of $S$ has at least two ends. Now this case reduces to the cases considered by Domat, and the first condition of Lemma \ref{domat} holds. The interior of $S$ having at least two ends ensures that we can find annuli that separate ends of the interior.
\end{proof}

\begin{proof} [Proof of Theorem {\hyperref[thma]{A}}]
    
Recall the reverse direction was shown in Proposition \ref{reverse_a}. Now we consider two cases, and then we are done by using Lemma \ref{finitechain} as in Theorem \hyperref[thmb]{B}.

 \p{Case 1: infinitely many ends accumulated by genus} First, suppose $S$ has infinitely many ends accumulated by genus. Let $\phi_1: \PMCG(S) \rightarrow \Z^\omega$ be the projection mapping given by Lemma \ref{structure}, and let $\phi_2: \Z^\omega \rightarrow (\Z_2)^\omega$ be the mod 2 homomorphism. Now we use the discontinuous homomorphism $\psi: (\Z_2)^\omega \rightarrow \Z_2$ from Example 1.4 of Rosendal \cite{Rosendal09}. Composing all of the above homomorphisms yields a discontinuous homomorphism $\psi\circ\phi_2\circ\phi_1: \PMCG(S) \rightarrow \Z_2$. To show that $\psi\circ\phi_2\circ\phi_1$ is discontinuous we use the discontinuity of $\psi$ and the fact that the $\phi_i$ are surjective and thus open by the open mapping theorem for Polish groups.
 
 \p{Case 2: finitely many ends accumulated by genus}
 Now suppose $S$ has finitely many ends accumulated by genus and satisfies one of the conditions for Lemma \ref{domat}. Note $\PMCGcc{S}$ is a countable index subgroup of $\PMCG(S)$ by Lemma \ref{structure}. By Lemma \ref{domat}, there is a map $\phi: \PMCGcc{S} \rightarrow \Q$ such that $\ker(\phi)$ is not open. Note we also have that $\ker(\phi)$ is not open in $\PMCG(S)$. Since $\ker(\phi)$ is countable index in $\PMCGcc{S}$, it must also be countable index in $\PMCG(S)$. Now $\PMCG(S)$ cannot have automatic continuity by Proposition \ref{smallindex}. 
\end{proof}

\subsection{Consequences} \label{extra}

 Now we finish with the proofs of some additional results.
  
  \begin{proof} [Proof of Theorem \ref{thm1.2}]
  
  Recall the Loch Ness monster case was shown by Domat and the author \cite{domat2020big}, so we can assume the interior of $S$ has at least two ends. By Lemma \ref{domat}, $\PMCGcc{S}$ has a nonopen countable index subgroup. Since $\PMCG(S)$ is finite index in $\MCG(S)$ by the assumption of finitely many ends, we can apply the same proof as the second case of Theorem \hyperref[thma]{A} to show $\MCG(S)$ does not have automatic continuity.
\end{proof}

\begin{proof} [Proof of Corollary \ref{cor1.1}] Suppose $S$ is a disk with handles. Proceeding by contradiction, let $H$ be the kernel of a nontrivial map from $\PMCGcc{S}$ to a countable group with the discrete topology. By automatic continuity, $H$ is open and closed. Since $H$ is closed, it suffices to show that it contains every compactly supported mapping class since then $H$ is dense in $\PMCGcc{S}$, and in fact $H = \PMCGcc{S}$. Since $H$ is open, it contains $\Stab(K)$ for some compact subsurface $K$. Now let $\phi$ be any compactly supported mapping class. Since $\phi$ fixes the boundary pointwise, we can isotope it so that it is supported in a subsurface $K^\prime$ that does not intersect the boundary. Since $S$ is a disk with handles, its interior is a Loch Ness monster, and there exists some homeomorphism supported in the interior that takes $K^\prime$ into the complement of $K$. Therefore, a conjugate of $\phi$ lies in $\Stab(K) \subset  H$, and we are done since $H$ is normal. There are no proper finite index subgroups in $\PMCGcc{S}$ since any index $n$ subgroup determines a nontrivial homomorphism to the symmetric group on $n$ elements via the left multiplication action on the left cosets.
\end{proof}


\begin{thebibliography}{99}

\bibitem{APV2017} {\scshape Aramayona, J; Patel, P; Vlamis, N.} The first integral cohomology of pure mapping class groups. {\em Int. Math. Res. Not. IMRN}. (2020), no. 22, 8973-8996. \mrev{4216709}, \zbl{1462.57020}, \doi{10.1093/imrn/rnaa229}, \arx{1711.03132}.

\bibitem{aramayona2021big} {\scshape Aramayona, J; Leininger, C; McLeay, A.} Big mapping class groups and the co-Hopfian property. {\em Michigan Math. J}. 74 (2024), no. 2, 253-281.  
\mrev{4739839},
\zbl{7855086},
\doi{10.1307/mmj/20216075},  \arx{2101.07188}.

\bibitem{BBF2015} {\scshape Bestvina, M; Bromberg, K; Fujiwara, K.} Constructing group actions on quasi-trees and applications to mapping class groups. {\em Publ. Math. IHÉS}. \textbf{122} (2015), 1-64. \mrev{3415065}, \zbl{1372.20029}, \doi{10.1007/s10240-014-0067-4}, \arx{1006.1939}.


\bibitem{classification} { \scshape Brown, E; Messer, R.} The classification of two-dimensional manifolds. {\em Trans. Am. Math. Soc.} \textbf{255} (1979), 377-402. \mrev{0542887}, \zbl{0391.57010}, \doi{10.2307/1998182}.


\bibitem{Dickmann_2023} {\scshape Dickmann, R.} Mapping class groups of surfaces with noncompact boundary components. {\em Algebraic \& Geometric Topology}. \textbf{23} (2023), no. 6, 2777-2821. \mrev{4640140}, \zbl{07786947}, \doi{10.2140/agt.2023.23.2777}, \arx{2101.00564}.

\bibitem{domat2020big} {\scshape Domat, G.} Big pure mapping class groups are never perfect. Appendix with Ryan Dickmann. {\em Math. Res. Lett.} \textbf{29} (2022), no. 3, 691-726. \mrev{4516036}, \zbl{07626171}, \doi{10.4310/MRL.2022.v29.n3.a4}, \arx{2007.14929}.

\bibitem{Mann2019} {\scshape Mann, K.} Automatic continuity for homeomorphism groups and big mapping class groups. {\em Michigan Math. J}. 74 (2024), no. 1, 215-224. 
\mrev{4718498},
\zbl{7828366},
\doi{10.1307/mmj/20216095}, \arx{2003.01173}.

\bibitem{masur1998geometry} {\scshape Masur, H; Minsky, Y.} Geometry of the complex of curves II: hierarchical structure. {\em Geom. Funct. Anal.} \textbf{10} (2000), no. 4, 902-974. \mrev{1791145}, \zbl{0972.32011}, \doi{10.1007/PL00001643}, \arx{math/9807150}.

\bibitem{Richards1963} {\scshape Richards, I.} On the classification of noncompact surfaces. {\em Trans. Amer. Math. Soc.} \textbf{106} (1963), 259-269. \mrev{0143186}, \zbl{0156.22203}, \doi{10.2307/1993768}.

\bibitem{Rosendal09} {\scshape Rosendal, C.} Automatic continuity of group homomorphisms. {\em Bull. Symbolic Logic}. \textbf{15} (2009), no. 2, 184-214. \mrev{2535429}, \zbl{1173.03037}, \doi{10.2178/bsl/1243948486}.

\bibitem{RS2007} {\scshape Rosendal, C; Solecki, S.} Automatic continuity of homomorphisms and fixed points on metric compacta. {\em Israel J. Math}. \textbf{162} (2007), 349-371. \mrev{2365867}, \zbl{1146.22003}, \doi{10.1007/s11856-007-0102-y}.


\bibitem{vlamis2020perfect} {\scshape Vlamis, N.} Three perfect mapping class groups. {\em New York J. Math.} \textbf{27} (2021), 468-474. \mrev{4226155}, \zbl{1459.57022}, \arx{2012.05993}.
\end{thebibliography}
\end{document}